\title{Notes on Lax Ends}
\author{Kengo Hirata
  \thanks{
    Research Institute for Mathematical Sciences, Kyoto University, Kyoto 606-8502, Japan\newline
    email: {\tt khirata@kurims.kyoto-u.ac.jp}
  }
}
\date{\today}
\definecolor{darkblue}{rgb}{0.2,0,0.6}
\numberwithin{equation}{section}
\declaretheoremstyle[
  spaceabove=6pt, spacebelow=6pt,%
  headfont=\normalfont\itshape,%
  notefont=\mdseries,%
  notebraces={(}{)},%
  bodyfont=\normalfont,%
  postheadspace=0.5em,%
  qed=$\square$%
]{myproofstyle}
\declaretheorem[style=plain,numberwithin=section,name=Theorem]{theorem}
\declaretheorem[style=plain,sibling=theorem,name=Lemma]{lemma}
\declaretheorem[style=plain,sibling=theorem,name=Proposition]{proposition}
\declaretheorem[style=plain,sibling=theorem,name=Corollary]{corollary}
\declaretheorem[style=definition,qed=$\blacksquare$,sibling=theorem,name=Definition]{definition}
\declaretheorem[style=definition,qed=$\blacksquare$,sibling=theorem,name=Example]{example}
\tikzset{1cell/.style={->, labelsize, auto}}
\tikzset{2cell/.style={-implies,double,double equal sign distance,shorten 
>=2pt, shorten <=3pt}}
\tikzset{2cellshort/.style={-implies,double,double equal sign distance,shorten 
>=4pt, shorten <=5pt}}
\tikzset{2cellr/.style={implies-,double,double equal sign distance,shorten 
>=3pt, shorten <=2pt}}
\tikzset{3cell/.style={-implies,double,double distance=2.5pt,shorten >=2pt, 
shorten <=3pt}}
\tikzset{labelsize/.style={font=\scriptsize}}
\tikzset{string/.style={very thick}}
\tikzset{
  pto/.style={->,postaction={decorate},
    decoration={
        markings,
        mark=at position 0.5 with {\arrow{|}}}
  },
}
\newcommand{\twocell}[5][]{\draw[2cell] (#2)++(#4) to node[auto,labelsize,#1]{#3} ++(#5);}
\newcommand{\twocelll}[3][]{\twocell[#1]{#2}{#3}{0.25,0}{-0.5,0}}
\newcommand{\twocellr}[3][]{\twocell[#1]{#2}{#3}{-0.25,0}{0.5,0}}
\newcommand{\twocelld}[3][]{\twocell[#1]{#2}{#3}{0,0.25}{0,-0.5}}
\newcommand{\twocellul}[3][]{\twocell[#1]{#2}{#3}{0.175,-0.175}{-0.35,0.35}}
\newcommand{\twocelldl}[3][]{\twocell[#1]{#2}{#3}{0.175,0.175}{-0.35,-0.35}}
\crefname{equation}{}{}
\newcommand{\id}{\mathrm{id}}
\newcommand{\op}{\mathrm{op}}
\newcommand{\co}{\mathrm{co}}
\newcommand{\ob}{\mathrm{ob}}
\newcommand{\Hom}{\mathrm{Hom}}
\newcommand{\laxlim}{{\mathrm{lax\,lim}}}
\newcommand{\laxcolim}{\mathrm{lax\,colim}}
\newcommand{\oplaxlim}{{\mathrm{oplax\,lim}}}
\newcommand{\iso}{\cong}
\newcommand{\eqv}{\simeq}
\newcommand{\power}{\pitchfork}
\newcommand{\copower}{\bullet}
\newcommand{\dflat}{{\musDoubleFlat}}
\newcommand{\dsharp}{{\musDoubleSharp}}
\newcommand{\cat}[1]{\mathcal{#1}}
\newcommand{\Lax}[1]{{\mathbf{Lax}[#1]}}
\newcommand{\Ps}[1]{{\mathbf{Ps}[#1]}}
\newcommand{\wtilde}[1]{\widetilde{#1}}
\newcommand{\wlim}[2]{{\lim\nolimits^{#1} #2}}
\newcommand{\wcolim}[2]{{\mathrm{colim}^{#1} #2}}
\newcommand{\Cat}{\mathbf{Cat}}
\newcommand{\A}{\cat{A}}
\newcommand{\B}{\cat{B}}
\newcommand{\K}{\cat{K}}
\newcommand{\One}{\mathbf{1}}
\newcommand{\one}{\mathbbm{1}}
\newcommand{\two}{\mathbbm{2}}
\NewDocumentCommand{\lend}{e{_^}}{
  \newintop{\hspace{-0.09em}\leftarrow}[#1][#2]%
}%
\NewDocumentCommand{\oplend}{e{_^}}{
  \newintop{\hspace{-0.09em}\rightarrow}[#1][#2]%
}%
\NewDocumentCommand{\psend}{e{_^}}{
  \newintop{\sim}[#1][#2]%
}%
\NewDocumentCommand{\newintop}{moo}{%
  \ThisStyle{\ensurestackMath{%
  \stackengine{0pt}{%
    \SavedStyle\int%
      \IfValueT{#2}{_{#2}}%
      \IfValueT{#3}{^{#3}}%
  }{%
    \SavedStyle#1%
  }{O}{l}{F}{F}{L}}}}%
\begin{document}
\maketitle
\begin{abstract}
  In enriched category theory, the notion of extranatural transformations is more fundamental than that of ordinary natural transformations, and the ends, the universal extranatural transformations, play a critical role.
  On the other hand, 2-category theory makes use of several other natural transformations, such as lax and pseudo transformations.
  For these weak transformations, it is known that we can define the corresponding extranatural transformations or ends.
  However, there is little literature describing such results in detail.
  We provide a detailed calculation of the lax end, including its relation to the lax limits.
  We prove the bicategorical coYoneda lemma as the dual of the bicategorical Yoneda lemma, and also show that the weight of any lax end is a PIE weight, but it might not be a weight for a lax limit.
\end{abstract}
\tableofcontents

\section{Introduction}

\subsection{Background}

While 2-categories can be defined as $\Cat$-enriched categories, there are various types of weak notions of functors and weak transformations. These weak gadgets cannot be defined in general enriched categories, but it is known that for many statements in enriched category theory, it is also possible to prove statements that are replaced by weak gadgets in 2-category theory. However, while the proofs of those statements in enriched category theory can be written concisely using ends, the corresponding theorems with weak gadgets in 2-category theories are written using jumbled diagrams and are not treated in a very unified manner.

Bozapalides introduced \emph{lax end} as a universal \emph{extraordinary lax natural transformation}, just as an end is a universal extraordinary natural transformation \cite{Bozapalides1977,Bozapalides1980}. However, the notion of lax ends has not been well studied, and the only reference containing a survey on lax ends is \cite{loregian_2021}. In these notes, we give detailed calculations and proofs of basic results on lax ends in 2-category theory. In particular, we treat some calculations on weighted lax limits in \Cref{section:lax_limits_4} and the bicategorical (co)Yoneda lemma in \Cref{section:yoneda_7}.
In \Cref{section:pie_lend_llim_6}, we also examine the class of limits where lax ends live.

For simplicity, in these notes, we restrict our attention to 2-categories and 2-functors, although Bozapalide defined lax ends for lax (or quasi) functors between 2-categories in \cite{Bozapalides1977,Bozapalides1980} and  Corner proved coYoneda lemma for bicategories and pseudo functors in \cite{Alexander2016coyoneda}.

\subsection{Notations}
Each term 2-categories, 2-functors, and 2-transformations implies $\Cat$-enriched categories, functors, and transformations.
Other morphisms of 2-categories include (op)lax and pseudo functors and the same for transformations.
Out of these kinds of functors, we only deal with 2-functors in these notes.
The functor categories which appear in these notes will be denoted as follows.
\begin{itemize}
  \item $\Lax{\A, \B}$: 2-functors, lax transformations, and modifications
  \item $\Ps{\A, \B}$: 2-functors, pseudo transformations, and modifications
  \item $[\A, \B]$: 2-functors, 2-transformations, and modifications
\end{itemize}

If $F\colon \A \rightarrow \Cat$ and $G\colon \A \rightarrow \K$ are 2-functors, the $F$ weighted limit of the diagram $G$ is denoted as $\wlim{F}{G}$.
Similarly, $\wcolim{F}{G}$ is the $F$ weighted colimit of the diagram $G$.
Weighted lax limits, which will be defined in \Cref{section:lax_limits_4}, will be denoted as $\laxlim^{F}{G}$.

As special weighted limits, we have ends and powers.
We write the end of $T \colon \A\times\B\rightarrow\K$ as $\int_{A\in\A}T(A,A)$, and the coend as $\int^{A\in\A} T(A,A)$.
If $K\in\K$ and $\mathbb{A} \in \Cat$, we write the power as $\mathbb{A}\power K$ and the copower as $\mathbb{A}\copower K$.

To denote 2-categories, we tend to use the calligraphic font $\A,\B,\K,\dots$,
and for 1-categories, the blackboard bold fonts $\mathbb{A},\one,\dots$.
For the terminal 2-category, we use $\One$.
The enriched yoneda embedding functor is denoted as $Y\colon\A \rightarrow [\A^\op, \Cat]$.

Let $G\colon\A\rightarrow\B$ a 2-functor.
We define two 2-functors $\widehat{G}\colon \B^\op\rightarrow[\A,\Cat]$ and $\widetilde{G}\colon\B\rightarrow[\A^\op,\Cat]$ by $\widehat{G} = \B(-,G?)$ and $\widetilde{G} = \B(G?,-)$.

\section{Lax naturalities}

\begin{definition}
  Let $T\colon \A^\op\times\A\rightarrow\K$ be a 2-functor and $K$ an object of $\K$.
  A \emph{lax wedge} (or \emph{extraordinary lax natural transformation})
  $\sigma\colon K\rightarrow T$ consists of:
  \begin{itemize}
    \item a 1-cell $\sigma_A\colon K\rightarrow T(A, A)$ for each object $A$ in $\A$;
    \item a 2-cell $\sigma_f\colon T(1_A,f)\sigma_A \rightarrow T(f,1_B)\sigma_B$
          for each 1-cell $f\colon A \rightarrow B$
          as shown in the diagram below;
          \[
            \begin{tikzpicture}
              \node(K) at (0,0) {$K$};
              \node(A) at (3,0) {$T(A,A)$};
              \node(B) at (0,-2) {$T(B,B)$};
              \node(AB) at (3,-2) {$T(A,B)$};
              \draw [1cell] (K) to node {$\sigma_A$} (A);
              \draw [1cell] (K) to node[swap] {$\sigma_B$} (B);
              \draw [1cell] (A) to node {$T(1_A,f)$} (AB);
              \draw [1cell] (B) to node[swap] {$T(f,1_B)$} (AB);
              \twocelldl[swap]{1.5,-1}{$\sigma_f$}
            \end{tikzpicture}
          \]
          satisfying the following three equalities:
          \begin{align}
            \begin{tikzpicture}[x=9.5mm,y=9mm,baseline=(A.base)]
              \node(K) at (0,0) {$K$};
              \node(A) at (2,-2) {$T(A,A)$};
              \node(B) at (-2,-2) {$T(B,B)$};
              \node(AB) at (0,-4) {$T(A,B)$};
              \draw [1cell] (K) to node {$\sigma_A$} (A);
              \draw [1cell] (K) to node[swap] {$\sigma_B$} (B);
              \draw [1cell, bend left] (A) to node {$T(1,f)$} (AB);
              \draw [1cell, bend right, near end, swap] (A) to node {$T(1,g)$} (AB);
              \draw [1cell, bend right] (B) to node[swap] {$T(g,1)$} (AB);
              \twocelll[swap]{0,-1.9}{$\sigma_g$}
              \twocellul[swap, near end]{0.7,-3.2}{$T(1,\alpha)$}
            \end{tikzpicture}
            \quad
             & =
            \quad
            \begin{tikzpicture}[x=9.5mm,y=9mm,baseline=(A.base)]
              \node(K) at (0,0) {$K$};
              \node(A) at (2,-2) {$T(A,A)$};
              \node(B) at (-2,-2) {$T(B,B)$};
              \node(AB) at (0,-4) {$T(A,B)$};
              \draw [1cell] (K) to node {$\sigma_A$} (A);
              \draw [1cell] (K) to node[swap] {$\sigma_B$} (B);
              \draw [1cell, bend left] (A) to node {$T(1,f)$} (AB);
              \draw [1cell, bend left, near end] (B) to node {$T(f,1)$} (AB);
              \draw [1cell, bend right] (B) to node[swap] {$T(g,1)$} (AB);
              \twocelll[swap]{0,-1.9}{$\sigma_f$}
              \twocelldl[near start, swap]{-0.7,-3.2}{$T(1,\alpha)$}
            \end{tikzpicture}
            \label{eq:wedge_naturality} \\
            \begin{tikzpicture}[baseline=(m.base)]
              \node(K) at (0,0) {$K$};
              \node(A) at (3,0) {$T(A,A)$};
              \node(B) at (0,-2) {$T(A,A)$};
              \node(AB) at (3,-2) {$T(A,A)$};
              \draw [1cell] (K) to node {$\sigma_A$} (A);
              \draw [1cell] (K) to node(m)[swap] {$\sigma_A$} (B);
              \draw [1cell] (A) to node {$1$} (AB);
              \draw [1cell] (B) to node[swap] {$1$} (AB);
              \twocelldl[swap]{1.5,-1}{$\sigma_{1_{A}}$}
            \end{tikzpicture}
            \quad
             & =
            \quad
            \mathrm{identity}
            \label{eq:wedge_identity}   \\
            \begin{tikzpicture}[x=11mm,y=10mm,baseline=(K.base)]
              \node(K) at (0,0) {$K$};
              \node(A) at (1,1.73) {$T(A,A)$};
              \node(C) at (1,-1.73) {$T(C,C)$};
              \node(AB) at (3,1.73) {$T(A,B)$};
              \node(BC) at (3,-1.73) {$T(B,C)$};
              \node(AC) at (4,0) {$T(A,C)$};
              \node(B) at (2,0) {$T(B,B)$};
              \draw [1cell] (K) to node {$\sigma_A$} (A);
              \draw [1cell] (K) to node[swap] {$\sigma_C$} (C);
              \draw [1cell] (A) to node {$T(1,f)$} (AB);
              \draw [1cell] (C) to node[swap] {$T(g,1)$} (BC);
              \draw [1cell] (AB) to node {$T(1,g)$} (AC);
              \draw [1cell] (BC) to node[swap] {$T(f,1)$} (AC);
              \draw [1cell] (K) to node[swap] {$\sigma_B$} (B);
              \draw [1cell] (B) to node[swap] {$T(f,1)$} (AB);
              \draw [1cell] (B) to node[] {$T(1,g)$} (BC);
              \twocelld[swap]{1.7,0.87}{$\sigma_f$}
              \twocelld[swap]{1.7,-0.87}{$\sigma_g$}
            \end{tikzpicture}
            \quad
             & =
            \quad
            \begin{tikzpicture}[x=11mm,y=10mm,baseline=(K.base)]
              \node(K) at (0,0) {$K$};
              \node(A) at (1,1.73) {$T(A,A)$};
              \node(C) at (1,-1.73) {$T(C,C)$};
              \node(AB) at (3,1.73) {$T(A,B)$};
              \node(BC) at (3,-1.73) {$T(B,C)$};
              \node(AC) at (4,0) {$T(A,C)$};
              \draw [1cell] (K) to node {$\sigma_A$} (A);
              \draw [1cell] (K) to node[swap] {$\sigma_C$} (C);
              \draw [1cell] (A) to node {$T(1,f)$} (AB);
              \draw [1cell] (C) to node[swap] {$T(g,1)$} (BC);
              \draw [1cell] (AB) to node {$T(1,g)$} (AC);
              \draw [1cell] (BC) to node[swap] {$T(f,1)$} (AC);
              \draw[1cell, sloped, swap] (A) to node {$T(1,gf)$} (AC);
              \draw[1cell, sloped] (C) to node {$T(gf,1)$} (AC);
              \twocelld[swap]{1.6,-0}{$\sigma_{gf}$}
            \end{tikzpicture}
            \label{eq:wedge_composition}
          \end{align}
  \end{itemize}
  Dually, a \emph{lax cowedge} is a lax wedge in $\K^\op$.
\end{definition}

Of course, an \emph{oplax wedge} $K\rightarrow T$ can also be defined as a lax wedge in $\K^\co$,
but this is the same as a lax wedge $K\rightarrow T'$ with $T'(A,B) = T(B,A)$.
Also, we will briefly discuss \emph{pseudo wedges} later in \Cref{section:yoneda_7}.

\begin{definition}
  Let $\sigma, \tau\colon K\rightarrow T$ be a pair of lax wedges.
  A \emph{modification} $\Gamma$ from $\sigma$ to $\tau$ is a family of 2-cells
  $\left\{\Gamma_A\colon \sigma_A \rightarrow\tau_A\right\}_{A\in\A}$, satisfying
  \[
    \begin{tikzpicture}[x=9.3mm,y=9mm,baseline=(A.base)]
      \node(K) at (0,0) {$K$};
      \node(A) at (2,-2) {$T(A,A)$};
      \node(B) at (-2,-2) {$T(B,B)$};
      \node(AB) at (0,-4) {$T(A,B)$};
      \draw [1cell, bend left=35] (K) to node {$\sigma_A$} (A);
      \draw [1cell, bend left=35, near start] (K) to node {$\sigma_B$} (B);
      \draw [1cell, bend right=35] (K) to node[swap] {$\tau_B$} (B);
      \draw [1cell] (A) to node {$T(1,f)$} (AB);
      \draw [1cell] (B) to node[swap] {$T(f,1)$} (AB);
      \twocelll[swap]{0,-2.3}{$\sigma_f$}
      \twocelll[swap]{-0.9,-1.1}{$\Gamma_f$}
    \end{tikzpicture}
    \quad
    =
    \quad
    \begin{tikzpicture}[x=9.3mm,y=9mm,baseline=(A.base)]
      \node(K) at (0,0) {$K$};
      \node(A) at (2,-2) {$T(A,A)$};
      \node(B) at (-2,-2) {$T(B,B)$};
      \node(AB) at (0,-4) {$T(A,B)$};
      \draw [1cell, bend left=35] (K) to node {$\sigma_A$} (A);
      \draw [1cell, bend right=35, near start, swap] (K) to node {$\tau_A$} (A);
      \draw [1cell, bend right=35] (K) to node[swap] {$\tau_B$} (B);
      \draw [1cell] (A) to node {$T(1,f)$} (AB);
      \draw [1cell] (B) to node[swap] {$T(f,1)$} (AB);
      \twocelll[swap]{0,-2.3}{$\tau_f$}
      \twocelll[swap]{0.9,-1.1}{$\Gamma_f$}
    \end{tikzpicture}.
  \]
\end{definition}

To see that this definition is reasonable, we first check that the lax wedges are compatible with 
the usual lax transformations of 2-functors.
That is, we check that a lax transformation $F\rightarrow G$ can be regarded as
a lax wedge of type $\one\rightarrow \K(F?,G-)$.

\begin{proposition}\label{thm:wedge_and_transformation}
  Let $F, G \colon \A^\op \rightarrow \K$ be 2-functors.
  Lax transformations $\alpha\colon F \rightarrow G$
  are bijective to lax wedges of type $\one \rightarrow \K(F?, G-)$.
  And also, modifications between lax transformations are
  bijective to modifications of corresponding lax wedges.
\end{proposition}
\begin{proof}
  One can easily check that a family $\left\{\sigma_A\colon\one\rightarrow\K(FA,GA) \right\}_{A\in\A}$
  corresponds to a family $\left\{\bar\sigma_A\colon FA\rightarrow GA \right\}_{A\in\A}$,
  and $\{\sigma_f\colon\K(1,f)\sigma_A\rightarrow\K(f,1)\sigma_B\}$
  to $\{\bar\sigma_f\colon Gf\circ\bar\sigma_A\rightarrow\bar\sigma_B\circ Ff\}$.
  Also, it can be checked that the coherence required for lax wedges and for lax transformations are equivalent.
\end{proof}

Unlike general enriched categories, lax naturality in $(A,B) \in \A\times\B$
can not be simply verified by checking the lax naturality in $A$ for each fixed $B$ and vice versa.
It requires additional equalities.

\begin{proposition}\label{prop:lax_naturality_for_each_variable}
  Let $T\colon\A^\op\times\B^\op\times\A\times\B\rightarrow\K$ be a 2-functor,
  and assume we have
  \begin{itemize}
    \item a family of 1-cells $\{\sigma_{AB}\colon K\rightarrow T(A,B,A,B)\}$ in $\K$,
    \item families of 2-cells ${\{\sigma_{fB}\colon T(1,1,f,1)\sigma_{AB} \rightarrow T(f,1,1,1)\sigma_{A'B}\}}_f$ for each $B$, and
    \item families of 2-cells ${\{\sigma_{Ag}\colon T(1,1,1,g)\sigma_{AB} \rightarrow T(1,g,1,1)\sigma_{AB'}\}}_g$ for each $A$.
  \end{itemize}
  Then, $\sigma_{-B}$ and $\sigma_{A-}$ are wedges for each $B$ and $A$ satisfying the following equality,
  \begin{align*}
    \begin{tikzpicture}[x=12mm,y=9mm,baseline=(AC.base)]
      \node(K) at (0,0) {$K$};
      \node(A) at (1,1.73)   {$TABAB$};
      \node(C) at (1,-1.73)  {$TA'B'A'B'$};
      \node(AB) at (3,1.73)  {$TABA'B$};
      \node(BC) at (3,-1.73) {$TA'BA'B'$};
      \node(AC) at (4,0)     {$TABA'B'$};
      \node(B) at (2,0)      {$TA'BA'B$};
      \draw [1cell] (K) to node {$\sigma_{AB}$} (A);
      \draw [1cell] (K) to node[swap] {$\sigma_{A'\!B'}$} (C);
      \draw [1cell] (K) to node[swap] {$\sigma_{A'\!B}$} (B);
      \draw [1cell] (A) to node        {$T(11f1)$} (AB);
      \draw [1cell] (C) to node[swap]  {$T(1g11)$} (BC);
      \draw [1cell] (AB) to node       {$T(111g)$} (AC);
      \draw [1cell] (BC) to node[swap] {$T(f111)$} (AC);
      \draw [1cell] (B) to node[swap]  {$T(f111)$} (AB);
      \draw [1cell] (B) to node[]      {$T(111g)$} (BC);
      \twocelld[swap]{1.7,0.87}{$\sigma_{fB}$}
      \twocelld[swap]{1.7,-0.87}{$\sigma_{A'g}$}
    \end{tikzpicture}
    \ =\
    \begin{tikzpicture}[x=12mm,y=9mm,baseline=(AC.base)]
      \node(K) at (0,0) {$K$};
      \node(A) at (1,1.73)   {$TABAB$};
      \node(B) at (2,0)      {$TAB'AB'$};
      \node(C) at (1,-1.73)  {$TA'B'A'B'$};
      \node(AB) at (3,1.73)  {$TABAB'$};
      \node(BC) at (3,-1.73) {$TAB'A'B'$};
      \node(AC) at (4,0)     {$TABA'B'$};
      \draw [1cell] (K) to node       {$\sigma_{AB}$} (A);
      \draw [1cell] (K) to node[swap] {$\sigma_{A'B'}$} (C);
      \draw [1cell] (K) to node[swap] {$\sigma_{AB'}$} (B);
      \draw [1cell] (A) to node        {$T(111g)$} (AB);
      \draw [1cell] (C) to node[swap]  {$T(f111)$} (BC);
      \draw [1cell] (AB) to node       {$T(11f1)$} (AC);
      \draw [1cell] (BC) to node[swap] {$T(1g11)$} (AC);
      \draw [1cell] (B) to node[swap]  {$T(1g11)$} (AB);
      \draw [1cell] (B) to node[]      {$T(11f1)$} (BC);
      \twocelld[swap]{1.7,0.87}{$\sigma_{Ag}$}
      \twocelld[swap]{1.7,-0.87}{$\sigma_{fB'}$}
    \end{tikzpicture}
  \end{align*}
  if and only if $\sigma$ is a wedge
  for $T\colon{(\A\times\B)}^\op\times(\A\times\B)\rightarrow\K$ with $\sigma_{fg}$ defined by this 2-cell.
\end{proposition}
\begin{proof}
  The ``if'' part is trivial and the other part can be shown by pasting some diagrams.
\end{proof}

\label{section:lax_naturality_2}

\section{Lax ends in Cat}

As in enriched category theory, we wish to define the lax end as a universal lax wedge.
In this section, we define a lax end for a $\Cat$-valued 2-functor $\A^\op\times\A\rightarrow\Cat$,
and examine its properties in this and the following two sections.
Of course, we can also define a lax end for general 2-category $\K$, which will be discussed later in \Cref{section:general_lax_end_5}.

\begin{definition}
  A \emph{lax end} of a 2-functor $T\colon\A^\op \times \A \rightarrow \Cat$ consists of a category
  $\lend_A T(A,A)$ and a lax wedge $\lambda\colon\lend_A T(A,A) \rightarrow T$,
  with the following universal properties:
  \begin{description}
    \item[1-dimensional]
      For each lax wedge $\sigma\colon X\rightarrow T$, there is a unique functor
      $u\colon X \rightarrow \lend_AT(A,A)$ satisfying $\sigma = \lambda u$, that is,
      $\sigma_A = \lambda_A u$ for each object in $\A$,
      and $\sigma_f = \lambda_fu$ for each 1-cell in $\A$.
    \item[2-dimensional]
      For each modification $\Gamma\colon \lambda u\rightarrow\lambda v\colon X \rightarrow T$,
      There is a unique 2-cell $\gamma \colon u\rightarrow v$
      such that $\Gamma=\lambda*\gamma$, that is, $\Gamma_A = \lambda_A\gamma$
      for each $A$ in $\A$.
  \end{description}

  Dually, we also define a \emph{lax coend} $T\rightarrow\lend^A T(A,A)$ as a universal cowedge in the same way.
\end{definition}

We show that there is a lax (co)end for every $T\colon \A^\op\times\A\rightarrow\Cat$
later in \Cref{prop:existence_in_cat}, but before we prove it,
let us now examine some of the properties that hold when it exists.

Let $T$ be a 2-functor $T\colon\A^\op\times\A\rightarrow\Cat$, and assume that its lax end $\lend_A T(A,A)$ exists.
Then, since the objects of $\lend_AT(A,A)$ correspond to lax wedges of type $\one \rightarrow T$,
its data can explicitly be written down; an object of $\lend_AT(A,A)$ consists of a pair of families
${\{x_A \in T(A,A)\}}_{A\in\A}$ and\\${\{x_f\colon T(1,f)x_A \rightarrow T(f,1)x_B\in T(A,B)\}}_{f\colon A\rightarrow B}$,
  satisfying the three axioms:
  \begin{itemize}
    \item $x_{1_A} = \mathrm{id}$,
    \item for each composable pair $(g, f)$ in $\A$, $T(f,1)x_g\circ T(1,g)x_f = x_{gf}$,
    \item for each 2-cell $\alpha\colon f\rightarrow g$,
          \[
            \begin{tikzcd}[column sep=large]
              {T(1,f)x_A} \rar["{T(1,\alpha)}_{x_A}"] \dar["{x_f}"'] & {T(1,g)x_A} \dar["{x_g}"]\\
              {T(f,1)x_B} \rar["{T(\alpha,1)}_{x_B}"] & {T(g,1)x_B.}
            \end{tikzcd}
          \]
  \end{itemize}
  A morphism from $x$ to $y$ in $\lend_AT(A,A)$ is a family ${\{\gamma_A\colon x_A\rightarrow y_A\}}_A$
  satisfying $T(f,1)\gamma_B \circ x_f = y_f \circ T(1,f)\gamma_A$ for each $f\colon A\rightarrow B$ in $\A$.

  If $\Delta\mathbb{B}\colon \mathbb{A}^\op \times \mathbb{A}\rightarrow \Cat$ is a constant functor which returns a category $\mathbb{B}$, then the lax end $\lend_A \Delta \mathbb{B}$ is the functor category $[\mathbb{A},\mathbb{B}]$.

  Although we gave an explicit presentation of a lax end as above,
  we will not use this presentation below,
  and all the following discussions will be based on its universality.

  First, we show that the two basic propositions -- analogous to the enriched case -- hold in our lax setting.

  \begin{proposition}\label{prop:hom_of_functor_category_as_lax_end}
    There is an isomorphism of categories
    \begin{align}
      \Lax{\A,\K}(F,G) \iso \lend_{A\in\A} \K(FA, GA).
      \label{eq:hom_of_functor_category_as_lax_end}
    \end{align}
  \end{proposition}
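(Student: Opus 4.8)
The plan is to equip the hom-category $\Lax{\A,\K}(F,G)$ with a canonical lax wedge into the 2-functor $\K(F-,G-)\colon\A^\op\times\A\to\Cat$, and to check that this wedge satisfies both universal properties defining a lax end; the desired isomorphism then follows from the essential uniqueness of universal objects. First I would write down the candidate wedge $\lambda$. Its component $\lambda_A\colon\Lax{\A,\K}(F,G)\to\K(FA,GA)$ is evaluation at $A$: it sends a lax natural transformation $\alpha$ to the 1-cell $\alpha_A\colon FA\to GA$, and a modification to its $A$-component. Since $\K(F-,G-)$ carries $(1,f)$ to postcomposition by $Gf$ and $(f,1)$ to precomposition by $Ff$, the wedge 2-cell $\lambda_f$ is defined at $\alpha$ to be the lax-naturality 2-cell $\alpha_f\colon Gf\circ\alpha_A\to\alpha_B\circ Ff$.

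The crux is a careful translation between the two bundles of data, and this is the step I expect to require the most care. I would verify that the three lax-wedge axioms for $\lambda$ unwind, after identifying $\lambda_f$ with $\alpha_f$, precisely into the unit, composition, and 2-cell-compatibility axioms of a lax natural transformation. The delicate points are that the orientation of the 2-cells in the lax-wedge axioms must agree with the chosen orientation of lax naturality, and that the whiskerings encoded by $T(1,g)$, $T(f,1)$, $T(1,\alpha)$, and $T(\alpha,1)$ must be matched with left and right whiskering by the images of $F$ and $G$; functoriality of $F$ and $G$ is what turns the wedge composition law into the lax-naturality composition law.

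Granting this dictionary, the universal properties are immediate. For the 1-dimensional property, a lax wedge $\sigma\colon X\to\K(F-,G-)$ is exactly a functor $u\colon X\to\Lax{\A,\K}(F,G)$: for each object $x$ the families $\{\sigma_A(x)\}$ and $\{\sigma_f(x)\}$ assemble, by the wedge axioms, into a lax natural transformation $u(x)$, functoriality of the $\sigma_A$ in $X$ supplies a modification $u(x)\to u(x')$ for each morphism of $X$, and the equation $\sigma=\lambda u$ determines $u$ uniquely. For the 2-dimensional property, a modification of lax wedges $\Gamma\colon\lambda u\to\lambda v$ is, componentwise, a natural transformation $\gamma\colon u\to v$ between the induced functors, and the equation $\Gamma=\lambda*\gamma$ pins down $\gamma$ uniquely. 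Hence $\Lax{\A,\K}(F,G)$ together with $\lambda$ is a lax end of $\K(F-,G-)$, which yields the stated isomorphism with $\lend_{A}\K(FA,GA)$.
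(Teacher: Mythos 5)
Your proof is correct and is in substance the same argument the paper uses: the paper's one-line proof simply invokes its earlier correspondence (\Cref{thm:wedge_and_transformation}) between lax wedges into $\K(F-,G-)$ and lax natural transformations $F\rightarrow G$ (with modifications as morphisms), which is exactly the evaluation-wedge dictionary you construct by hand before checking the two universal properties. You have effectively inlined a self-contained proof of that cited result, with the orientation of $\lambda_f$ as $\alpha_f\colon Gf\circ\alpha_A\rightarrow\alpha_B\circ Ff$ matching the paper's conventions, so the argument goes through.
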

  \begin{proof}
    Trivial from \Cref{thm:wedge_and_transformation}.
  \end{proof}

  \begin{proposition}\label{prop:cat_lax_end_commute_representable}
    A lax end commutes with representables
    \begin{align}
      \left[X, \lend_{A\in\A} T(A,A)\right] \iso \lend_{A\in\A}[X, T(A,A)].
      \label{eq:cat_lax_end_commute_representable}
    \end{align}
  \end{proposition}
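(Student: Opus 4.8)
The plan is to show directly that $[X, \lend_{A} T(A,A)]$ carries a universal lax wedge into the composite 2-functor $[X,-]\circ T \colon \A^\op \times \A \to \Cat$, so that by the defining universal property it must be (isomorphic to) $\lend_{A}[X,T(A,A)]$. The essential tool is the 2-adjunction $(-\times X) \dashv [X,-]$ in $\Cat$, which supplies a 2-natural isomorphism
\[
\Cat(W \times X, Z) \iso \Cat\bigl(W, [X, Z]\bigr)
\]
(the exponential transpose) for all categories $W$ and $Z$; I will use that this is an isomorphism of categories, matching functors with functors and natural transformations with natural transformations.

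For the candidate wedge, I would apply the 2-functor $[X,-]$ to the universal lax wedge $\lambda \colon \lend_{A} T(A,A) \to T$. Since any 2-functor preserves the data of a lax wedge (the components $\lambda_A$ and the comparison 2-cells $\lambda_f$) together with its three defining axioms, the family $[X,\lambda]$ is again a lax wedge, now from $[X, \lend_{A} T(A,A)]$ into $[X,-]\circ T$.

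The heart of the argument, and the step I expect to be the main obstacle, is a correspondence lemma: the exponential transpose induces an isomorphism between the category of lax wedges $W \times X \to T$ (with modifications as morphisms) and the category of lax wedges $W \to [X,-]\circ T$, naturally in $W$. Concretely, a wedge $\sigma$ with components $\sigma_A \colon W\times X \to T(A,A)$ and 2-cells $\sigma_f$ transposes componentwise to $\hat\sigma_A \colon W \to [X,T(A,A)]$ and $\hat\sigma_f$; the real work is to verify that the three wedge axioms for $\sigma$ are equivalent to those for $\hat\sigma$, and likewise that modifications transpose to modifications. This is exactly where the \emph{2}-naturality of currying (not merely its naturality on underlying functors) is needed, since the axioms are equations between pasted 2-cells.

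Granting the lemma, the two universal properties follow formally. Given a lax wedge $\tau \colon W \to [X,-]\circ T$, transpose it to $\tilde\tau \colon W \times X \to T$; the 1-dimensional universal property of $\lend_{A} T(A,A)$ gives a unique $\tilde u$ with $\lambda\tilde u = \tilde\tau$, and transposing back yields the unique $u \colon W \to [X, \lend_{A} T(A,A)]$ with $[X,\lambda]\,u = \tau$. The 2-dimensional property is obtained identically from the correspondence of modifications. Hence $[X, \lend_{A} T(A,A)]$ satisfies the universal property of $\lend_{A}[X,T(A,A)]$, which is the desired isomorphism \eqref{eq:cat_lax_end_commute_representable}.
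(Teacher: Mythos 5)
Your proposal is correct, but it is organized differently from the paper's proof. The paper argues by direct description: using the two universal properties of $\lend_A T(A,A)$, it identifies the objects of $\left[X, \lend_A T(A,A)\right]$ with lax wedges $X \to T$ and its morphisms with the corresponding modifications, identifies the objects and morphisms of $\lend_A[X,T(A,A)]$ with lax wedges $\one \to [X,T(?,-)]$ and their modifications, and then matches the two categories -- which is exactly your currying correspondence, but only needed at the single instance $W = \one$ (since $\one \times X \iso X$). You instead prove representability at a generic test category $W$: via the 2-adjunction $(-\times X)\dashv[X,-]$ you show that $[X,\lambda]$ is itself a universal lax wedge into $[X,-]\circ T$, checking both the 1- and 2-dimensional universal properties. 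The essential content is the same -- your correspondence lemma specialized to $W=\one$ is precisely what the paper dismisses as ``easily shown'' -- but your route buys somewhat more: it exhibits the isomorphism as the canonical comparison functor induced by $[X,\lambda]$ rather than just producing some isomorphism of categories; it shows that $\lend_A[X,T(A,A)]$ exists whenever $\lend_A T(A,A)$ does; and it is phrased so as to generalize verbatim to lax ends in a general 2-category admitting cotensors, which is relevant to the paper's later treatment of lax ends in arbitrary $\K$. Two small points of care, both of which you correctly anticipate: the wedge axioms and the modification condition are equations between pasted 2-cells, so transposition must be used as an isomorphism of hom-categories compatible with whiskering by the functors $T(1,f)$ and $T(f,1)$ (2-naturality in both variables), not merely as a bijection on functors; and the final step tacitly uses that an object satisfying the 1- and 2-dimensional universal properties of a lax end is unique up to isomorphism of categories, which is standard but worth stating.
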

  \begin{proof}
    From the universal property of lax ends, the objects in left-hand side
    are bijective to lax wedges of form $X \rightarrow T$, and the morphisms
    corresponds to those modifications.
    On the other hand, objects in the right-hand side are
    lax wedges of form $\one \rightarrow [X, T(?,-)]$, and the morphisms are
    those modifications.
    One can easily show these two categories are isomorphic.
  \end{proof}

  In order to show the existence of lax ends,
  we would like to represent a lax end with a weighted limit in $\Cat$
  and deduce the existence by the completeness of $\Cat$.
To this end, we introduce \emph{lax descent objects}.

\begin{definition}
  Coherence data in a 2-category $\K$ consists of
  three objects $X_1$, $X_2$, $X_3$, six 1-cells
  \[
    \begin{tikzpicture}
      \node(dummy) at (0,0.5) {};
      \node(X1) at (3,0) {$X_1$};
      \node(X2) at (0,0) {$X_2$};
      \node(X3) at (-3,0) {$X_3$};
      \draw[1cell, transform canvas={yshift=12pt}, swap]
      (X2) -- node {$r$} (X3);
      \draw[1cell, swap]
      (X2) -- node {$s$} (X3);
      \draw[1cell, transform canvas={yshift=-12pt}, swap]
      (X2) -- node {$t$} (X3);
      \draw[1cell, transform canvas={yshift=12pt}, swap]
      (X1) -- node(l) {$v$} (X2);
      \draw[1cell]
      (X2) -- node {$i$} (X1);
      \draw[1cell, transform canvas={yshift=-12pt}, swap]
      (X1) -- node {$w$} (X2);
    \end{tikzpicture}
  \]
  and five equalities
  \begin{align*}
    \delta & \colon iv = 1 ,
           & \gamma          & \colon 1  = iw, \\
    \kappa & \colon rv = sv,
           & \lambda         & \colon tw = sw, \\
    \rho   & \colon rw = tv.
  \end{align*}
  If we regard the simplex category as a discrete 2-category, this coherence data is a full sub 2-category of the simplex category.
\end{definition}

An example of coherence data (with non-trivial 2-cells) can be found in 2-monad theory \cite{Lack2005codescent}.

\begin{definition}
  A lax descent object of the coherence data consists of
  an object $X$, a 1-cell $x\colon X\rightarrow X_1$, and
  a 2-cell $\xi\colon vx\Rightarrow wx$
  satisfying the following diagram equalities,
  \begin{align}
    \begin{tikzpicture}[baseline=(X.base)]
      \node(X) at (0,0) {$X$};
      \node(U) at (1,1) {$X_1$};
      \node(D) at (1,-1) {$X_1$};
      \node(M) at (2,0) {$X_2$};
      \node(R) at (3.5,0) {$X_1$};
      \draw[1cell] (X) to node {$x$} (U);
      \draw[1cell,swap] (X) to node {$x$} (D);
      \draw[1cell] (U) to node {$v$} (M);
      \draw[1cell,swap] (D) to node {$w$} (M);
      \draw[1cell] (M) to node {$i$} (R);
      \twocelld{1,0}{$\xi$}
    \end{tikzpicture}
    \quad & =\quad
    \begin{tikzpicture}[baseline=(X.base)]
      \node(X) at (-0.5,0) {$X$};
      \node(M) at (1,0) {$X_1$};
      \node(U) at (2,1) {$X_2$};
      \node(D) at (2,-1) {$X_2$};
      \node(R) at (3,0) {$X_1$};
      \draw[1cell] (X) to node {$x$} (M);
      \draw[1cell] (M) to node {$v$} (U);
      \draw[1cell,swap] (M) to node {$w$} (D);
      \draw[1cell] (U) to node {$i$} (R);
      \draw[1cell,swap] (D) to node {$i$} (R);
      \draw[1cell,] (M) to node {$1$} (R);
    \end{tikzpicture}
    \label{eq:lax_descent_small} \\
    \begin{tikzpicture}[baseline=(l.base)]
      \node(Top) at (0,2) {$X_1$};
      \node(LU) at (-1.7,1) {$X$};
      \node(RU) at (1.7,1) {$X_2$};
      \node(MU) at (0,0) {$X_1$};
      \node(Bot) at (0,-2) {$X_2$};
      \node(LD) at (-1.7,-1) {$X_1$};
      \node(RD) at (1.7,-1) {$X_3$};
      \draw[1cell] (LU) to node {$x$} (Top);
      \draw[1cell,swap] (LU) to node {$x$} (MU);
      \draw[1cell,swap] (LU) to node {$x$} (LD);
      \draw[1cell] (Top) to node {$v$} (RU);
      \draw[1cell,swap] (MU) to node {$w$} (RU);
      \draw[1cell] (MU) to node {$v$} (Bot);
      \draw[1cell,swap] (LD) to node {$w$} (Bot);
      \draw[1cell] (RU) to node(l) {$r$} (RD);
      \draw[1cell,swap] (Bot) to node {$t$} (RD);
      \twocelld{0,1}{$\xi$}
      \twocelld{-0.9,-0.5}{$\xi$}
    \end{tikzpicture}
    \quad & =\quad
    \begin{tikzpicture}[baseline=(l.base)]
      \node(Top) at (0,2) {$X_1$};
      \node(LU) at (-1.7,1) {$X$};
      \node(RU) at (1.7,1) {$X_2$};
      \node(Bot) at (0,-2) {$X_2$};
      \node(LD) at (-1.7,-1) {$X_1$};
      \node(RD) at (1.7,-1) {$X_3$};
      \node(MD) at (0,-0) {$X_2$};
      \draw[1cell] (LU) to node {$x$} (Top);
      \draw[1cell,swap] (LU) to node {$x$} (LD);
      \draw[1cell] (Top) to node {$v$} (RU);
      \draw[1cell,swap] (LD) to node {$w$} (Bot);
      \draw[1cell,swap] (LD) to node {$w$} (MD);
      \draw[1cell] (RU) to node(l) {$r$} (RD);
      \draw[1cell,swap] (Bot) to node {$t$} (RD);
      \draw[1cell,swap] (MD) to node {$s$} (RD);
      \draw[1cell] (Top) to node {$v$} (MD);
      \twocelld{-0.9,0.5}{$\xi$}
    \end{tikzpicture}
    \label{eq:lax_descent_large}
  \end{align}
  with the one and two-dimensional universal property.
  The one dimensional universal property is that, if there is another triple
  $(Y,y\colon Y\rightarrow X_1,\eta\colon vy\Rightarrow wy)$ satisfying
  the same equality of 2-cells, there uniquely exists a 1-cell $u\colon Y\rightarrow X$
  such that $xu = y$ and $\xi u = \eta$.
  And the two-dimensional universal property is that, if there is a
  pair of 1-cells $u,u'\colon Y\rightarrow X$ and a 2-cell
  $\alpha\colon xu \Rightarrow xu'$ satisfying $\xi u' \circ v \alpha = w \alpha \circ \xi u$,
  there uniquely exists a 2-cell $\beta\colon u\Rightarrow u'$
  such that $\alpha = x\beta$.
\end{definition}

This lax descent object can easily be obtained by taking several limits.

\begin{lemma}\label{lem:codescent_objects_are_iterated_limits}
  Let $\K$ be a 2-category.
  If $\K$ admits inserters and equifiers,
  then $\K$ also admits lax descent objects.
\end{lemma}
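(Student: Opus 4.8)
The plan is to build the lax descent object in three stages: an inserter followed by two equifiers. First I would form the inserter $(I, p\colon I\to X_1, \theta\colon vp \Rightarrow wp)$ of the parallel pair $v,w\colon X_1\to X_2$. Its one- and two-dimensional universal properties are verbatim those required of a lax descent object, except that no coherence equation is yet imposed on $\theta$ and the target is $I$ rather than the sought object; the role of the two equifiers is precisely to cut $I$ down to the sub-object on which \eqref{eq:lax_descent_small} and \eqref{eq:lax_descent_large} hold.

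Next I would read the two equations as equalities of $2$-cells and enforce them by equifiers. Whiskering $\theta$ by $i$ gives $i\theta\colon ivp \Rightarrow iwp$; by the strict equalities $\delta\colon iv=1$ and $\gamma\colon iw=1$ this is literally an endo-$2$-cell of $p$, so I would take the equifier $e_1\colon E_1\to I$ of $i\theta$ and $\mathrm{id}_p$ to impose the small equation. For the large equation, the composite $t\theta\circ r\theta$ (composable because $rw=tv$ by $\rho$) and $s\theta$ (where $sv=rv$ by $\kappa$ and $sw=tw$ by $\lambda$) are both $2$-cells $rvp\Rightarrow twp$, and I would take the equifier $e_2\colon E_2\to E_1$ of these two, whiskered by $e_1$. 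Setting $X:=E_2$, $x:=p e_1 e_2$ and $\xi:=\theta e_1 e_2$ then yields a triple satisfying both equations by construction.

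To verify the universal properties, given a triple $(Y,y,\eta)$ satisfying both equations, the inserter yields a unique $u_0\colon Y\to I$ with $pu_0=y$ and $\theta u_0=\eta$; whiskering the two equations for $(Y,y,\eta)$ by $u_0$ shows exactly that $u_0$ meets both equifier conditions, so it factors uniquely as $e_1 e_2 u$, and $u$ is the required $1$-cell. For the two-dimensional property, given $u,u'$ and $\alpha\colon xu\Rightarrow xu'$ with $\xi u'\circ v\alpha = w\alpha\circ \xi u$, the inserter produces a unique $2$-cell $\beta_0\colon e_1e_2 u\Rightarrow e_1 e_2 u'$ with $p\beta_0=\alpha$; since equifier inclusions are bijective on $2$-cells, so is the composite $e_1 e_2$, and I would lift $\beta_0$ to the unique $\beta\colon u\Rightarrow u'$ with $x\beta=\alpha$.

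The main obstacle is bookkeeping rather than structure: correctly reading the two pasting diagrams as equalities between $2$-cells with the right parallel source and target -- especially the large equation, where one composes two whiskered copies of $\theta$ and reindexes along $\kappa$, $\lambda$, and $\rho$ -- and then checking that whiskering these equations by $u_0$ reproduces precisely the equifier factorization conditions. The one genuinely structural input is that equifiers are $2$-fully-faithful, which is what allows the inserter's two-dimensional universal property to pass unchanged to $X$.
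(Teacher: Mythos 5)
Your proposal is correct and follows exactly the paper's own construction: form the inserter of $v$ and $w$, then take two equifiers enforcing equations \eqref{eq:lax_descent_small} and \eqref{eq:lax_descent_large}, with the universal properties inherited from those of the constituent limits. You have simply spelled out in full the details (the reindexing along $\delta$, $\gamma$, $\kappa$, $\lambda$, $\rho$, and the verification of the one- and two-dimensional universal properties) that the paper's proof leaves as an outline.
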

\begin{proof}
  Take an inserter of $v$ and $w$,
  \[
    \begin{tikzpicture}[x=10mm,y=8mm]
      \node(I) at (0,2) {$I$};
      \node(R) at (2,2) {$X_1$};
      \node(L) at (0,0) {$X_1$};
      \node(T) at (2,0) {$X_2$};
      \draw[1cell] (I) to node {$a$} (R);
      \draw[1cell,swap] (I) to node {$a$} (L);
      \draw[1cell] (R) to node {$v$} (T);
      \draw[1cell,swap] (L) to node {$w$} (T);
      \twocelldl{1,1}{$\zeta$}
    \end{tikzpicture}
  \]
  and the lax descent object is obtained by taking equifiers twice for the diagrams in \cref{eq:lax_descent_large,eq:lax_descent_small}
  with $\xi$ properly substituted using $\zeta$.
\end{proof}

Finally, we show that a lax end is a kind of lax descent object,
and prove its existence.

\begin{proposition}\label{prop:existence_in_cat}
  For a small $\A$, any 2-functor $T\colon\A^\op \times \A \rightarrow \Cat$
  has a lax end and a lax coend.
\end{proposition}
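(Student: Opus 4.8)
The plan is to exhibit the lax end of $T$ as the lax descent object of a suitable piece of coherence data in $\Cat$, and then to appeal to \Cref{lem:codescent_objects_are_iterated_limits} together with the completeness of $\Cat$. Since $\Cat$ admits all inserters and equifiers, that lemma guarantees it admits lax descent objects, so it will suffice to produce coherence data whose lax descent object carries exactly the universal property demanded of $\lend_A T(A,A)$. The lax coend is then obtained dually, replacing inserters and equifiers by coinserters and coequifiers, and lax descent objects by lax codescent objects, all of which exist in the cocomplete $2$-category $\Cat$.

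To build the coherence data I would take, using the smallness of $\A$,
\[
  X_1 = \prod_{A} T(A,A), \qquad
  X_2 = \prod_{A,B} [\A(A,B), T(A,B)], \qquad
  X_3 = \prod_{A,B,C} [\A(B,C)\times\A(A,B),\, T(A,C)],
\]
with the products ranging over the objects, the ordered pairs, and the ordered triples of objects of $\A$. Using functor categories here, rather than bare products indexed by the $1$-cells and by composable pairs of $1$-cells, is what will record the action of $T$ on $2$-cells. The six functors are the evident coface and codegeneracy maps: $v$ and $w$ send $(x_A)_A$ to the families $f\mapsto T(1,f)x_A$ and $f\mapsto T(f,1)x_B$ (acting on a $2$-cell $\alpha$ by $T(1,\alpha)$ and $T(\alpha,1)$), the functor $i$ evaluates at identity $1$-cells, and $r,s,t$ send $(y_f)_f$ to $T(1,g)y_f$, $y_{gf}$, and $T(f,1)y_g$. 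Checking the five equalities is then routine: for instance $rv=sv$ and $tw=sw$ reduce to the functoriality identities $T(1,gf)=T(1,g)T(1,f)$ and $T(gf,1)=T(f,1)T(g,1)$, while $rw=tv$ reduces to bifunctoriality, $T(1,g)T(f,1)=T(f,g)=T(f,1)T(1,g)$.

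The heart of the proof is to identify the lax descent object of this coherence data with $\lend_A T(A,A)$. A $1$-cell $x\colon X\to X_1$ is a family $(x_A)_A$, and a $2$-cell $\xi\colon vx\Rightarrow wx$ is precisely a family $x_f\colon T(1,f)x_A\to T(f,1)x_B$; naturality of $\xi$, which is automatic because the morphisms of $X_2$ are natural transformations, is exactly the third lax-wedge axiom, the compatibility with $2$-cells $\alpha\colon f\to g$. One then unwinds \cref{eq:lax_descent_small} to the unit axiom $x_{1_A}=\mathrm{id}$ and \cref{eq:lax_descent_large} to the composition axiom $T(f,1)x_g\circ T(1,g)x_f = x_{gf}$, so that descent data for this coherence data coincide with lax wedges $X\to T$, compatibly with the wedge components $\lambda_A,\lambda_f$. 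I expect the main obstacle to be the $2$-dimensional bookkeeping: one must check that the $2$-dimensional universal property of the lax descent object --- concerning a $2$-cell $\alpha\colon xu\Rightarrow xu'$ with $\xi u'\circ v\alpha = w\alpha\circ\xi u$ --- matches, under this dictionary, the modifications governing the $2$-dimensional universal property of the lax end. Once both universal properties are seen to agree, the lax descent object supplied by \Cref{lem:codescent_objects_are_iterated_limits} is the desired $\lend_A T(A,A)$, and the dual construction gives $\lend^A T(A,A)$.
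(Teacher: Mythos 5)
Your proposal is correct and takes essentially the same route as the paper: the same objects $X_1$, $X_2$, $X_3$, the same six functors (the paper packages them as transposes of $T(A,-)$, $T(-,B)$, evaluation at identities, and composition, which agree with your pointwise formulas), the same identification of lax descent data with lax wedges via \cref{eq:lax_descent_small,eq:lax_descent_large}, and the same appeal to the completeness of $\Cat$ through \Cref{lem:codescent_objects_are_iterated_limits}, with the coend handled dually. The bookkeeping you flag but defer (including the two-dimensional universal property) is exactly what the paper also omits as ``complicated and redundant,'' so your level of detail matches the original.
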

\begin{proof}
  Let $X_1$, $X_2$, $X_3$ be as follows.
  \begin{align*}
    X_1 & = \prod_{A\in\A} T(A,A)                             \\
    X_2 & = \prod_{A,B\in\A} [\A(A,B), T(A,B)]                \\
    X_3 & = \prod_{A,B,C\in\A} [\A(B,C)\times\A(A,B), T(A,C)]
  \end{align*}
  Then we need to define six functors
  \[
    \begin{tikzpicture} \node(dummy) at (0,0.5) {};
      \node(X1) at (3,0) {$X_1$};
      \node(X2) at (0,0) {$X_2$};
      \node(X3) at (-3,0) {$X_3$};
      \draw[1cell, transform canvas={yshift=12pt}, swap]
      (X2) -- node {$r$} (X3);
      \draw[1cell, swap]
      (X2) -- node {$s$} (X3);
      \draw[1cell, transform canvas={yshift=-12pt}, swap]
      (X2) -- node {$t$} (X3);
      \draw[1cell, transform canvas={yshift=12pt}, swap]
      (X1) -- node(l) {$v$} (X2);
      \draw[1cell]
      (X2) -- node {$i$} (X1);
      \draw[1cell, transform canvas={yshift=-12pt}, swap]
      (X1) -- node {$w$} (X2);
    \end{tikzpicture}.
  \]
  Since this is very complicated and redundant,
  we omit the detail and give an outline.
  We define six functors as follows.
  \begin{align*}
    v_{AB}  & = T(A,-)  \colon \A(A,B)                \rightarrow [T(A,A), T(A,B)] \\
    w_{AB}  & = T(-,B)  \colon \A(A,B)                \rightarrow [T(B,B), T(A,B)] \\
    i_A     & = 1_A     \colon \one                   \rightarrow \A(A,A)          \\
    r_{ABC} & = T(A,-)  \colon \A(B,C)                \rightarrow [T(A,C), T(A,C)] \\
    s_{ABC} & = c_{ABC} \colon \A(B,C) \times \A(A,B) \rightarrow \A(A,C)          \\
    t_{ABC} & = T(-,C)  \colon \A(A,B)                \rightarrow [T(B,C), T(A,C)]
  \end{align*}
  Functors $v$, $w$, $i$, $r$, $s$, $t$ are those canonically constructed from these data.
  For example, $v$ is defined by the product of the functors
  \[
    X_1 \xrightarrow{\pi_A} T(A,A) \xrightarrow{\overline{v_{AB}}} [\A(A,B), T(A,B)]
  \]
  where $\overline{v_{AB}}$ is the transpose of $v_{AB}$.
  One can check that these six functors actually constitute coherence data,
  whose five 2-cells are all identities.

  Since $\Cat$ is complete, there is a descent object $(X,x,\xi)$ for this coherence data.
  Let $\lambda_A$ be the composite
  \[
    X\xrightarrow{x} X_1 \xrightarrow{\pi_A} T(A,A)
  \]
  and $\lambda_{AB}$ be the 2-cell
  \[
    \begin{tikzpicture}[x=15mm,y=10mm,baseline=(B.base)]
      \node(X) at (0,0) {$X$};
      \node(A1) at (1,1.5) {$X_1$};
      \node(B1) at (1,-1.5) {$X_1$};
      \node(AB1) at (2,0) {$X_2$};
      \node(A) at (3,1.5) {$T(A,A)$};
      \node(B) at (3,-1.5) {$T(B,B)$};
      \node(AB) at (4,0) {$[\A(A,B),T(A,B)]$};
      \draw[1cell] (X) to node {$x$} (A1);
      \draw[1cell,swap] (X) to node {$x$} (B1);
      \draw[1cell] (A1) to node {$v$} (AB1);
      \draw[1cell] (B1) to node {$w$} (AB1);
      \draw[1cell] (A1) to node {$\pi_A$} (A);
      \draw[1cell] (B1) to node {$\pi_B$} (B);
      \draw[1cell] (AB1) to node {$\pi_{AB}$} (AB);
      \draw[1cell] (A) to node {$v_{A}$} (AB);
      \draw[1cell,swap] (B) to node {$w_{B}$} (AB);
      \twocelld{0.95,0}{$\xi$}
    \end{tikzpicture}.
  \]
  One can show that, for each $A$ and $B$, this 2-cell $\lambda_{AB}$ corresponds to
  a family ${\{\lambda_f\}}_{f\colon A\rightarrow B}$
  \[
    \begin{tikzpicture}[x=12mm,y=8mm,baseline=(B.base)]
      \node(K) at (0,0) {$X$};
      \node(A) at (3,0) {$T(A,A)$};
      \node(B) at (0,-2) {$T(B,B)$};
      \node(AB) at (3,-2) {$T(A,B)$};
      \draw [1cell] (K) to node {$\lambda_A$} (A);
      \draw [1cell] (K) to node[swap] {$\lambda_B$} (B);
      \draw [1cell] (A) to node {$T(1_A,f)$} (AB);
      \draw [1cell] (B) to node[swap] {$T(f,1_B)$} (AB);
      \twocelldl[swap]{1.5,-1}{$\lambda_f$}
    \end{tikzpicture}
  \]
  which is natural in $f$ in the sense of \cref{eq:wedge_naturality}.
  This family $\lambda_f$ also satisfies each of \cref{eq:wedge_identity,eq:wedge_composition}
  because the equality of diagrams \cref{eq:lax_descent_small,eq:lax_descent_large}
  in the definition of descent objects corresponds respectively.
  Also, the universality of a lax end follows from that of limits.
\end{proof}

From this construction of lax ends with weighted limits, we deduce
the following two corollaries.

\begin{corollary}\label{cor:functoriality_laxend_cat}
  Let $T$ be a 2-functor $\A^\op\times\A\times\B\rightarrow\Cat$.
  There is a canonical 2-functor $\lend_A T(A,A,-)$, sending
  $B$ to $\lend_A T(A,A,B)$.
\end{corollary}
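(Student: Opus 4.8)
The plan is to exploit the universal property of the lax end to turn the third variable into functorial data, following the ``functoriality of (weighted) limits'' philosophy signalled by the construction in \Cref{prop:existence_in_cat}. For each object $B\in\B$ the 2-functor $T(-,-,B)\colon\A^\op\times\A\rightarrow\Cat$ has a lax end by \Cref{prop:existence_in_cat}, equipped with its universal lax wedge $\lambda^B\colon\lend_A T(A,A,B)\rightarrow T(-,-,B)$; this fixes the object assignment $B\mapsto\lend_A T(A,A,B)$.

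To define the action on a 1-cell $g\colon B\rightarrow B'$, I would post-compose the universal wedge $\lambda^B$ with the third-variable action functors $T(A,A,g)\colon T(A,A,B)\rightarrow T(A,A,B')$. Because $T$ is a 2-functor and composition in $\A^\op\times\A\times\B$ is taken componentwise, the functor $T(-,-,g)$ commutes strictly with both $T(1,f,-)$ and $T(f,1,-)$ for every 1-cell $f$ of $\A$; whiskering $\lambda^B_f$ by $T(-,-,g)$ therefore produces 2-cells making $\{T(A,A,g)\lambda^B_A\}_A$ into a lax wedge $\lend_A T(A,A,B)\rightarrow T(-,-,B')$. The 1-dimensional universal property of $\lend_A T(A,A,B')$ then yields a unique functor $\lend_A T(A,A,g)$, which I take as the value on $g$. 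For a 2-cell $\phi\colon g\Rightarrow g'$ of $\B$, the natural transformations $T(A,A,\phi)$ whisker $\lambda^B$ to give a modification between the two wedges just constructed, and the 2-dimensional universal property produces a unique 2-cell $\lend_A T(A,A,\phi)$.

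Strict 2-functoriality -- preservation of identities, of the horizontal composites of 1-cells and 2-cells, and the middle-four interchange -- then falls out purely from the uniqueness clauses of these universal properties: in each case both candidates classify the same lax wedge or modification, so they must coincide.

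The only step that is not purely formal is checking that post-composition with $T(-,-,g)$ actually preserves the lax-wedge structure, i.e.\ that the whiskered family still satisfies the naturality condition \cref{eq:wedge_naturality} together with \cref{eq:wedge_identity} and \cref{eq:wedge_composition}. I expect this to be the main (though routine) obstacle; it reduces entirely to the strict commutation of the $\B$-action with the $\A^\op\times\A$-action noted above, which lets each defining pasting equality for $\lambda^B$ be whiskered by $T(-,-,g)$ intact. Alternatively, one could bypass these verifications by invoking the explicit presentation of the lax end as an iterated inserter/equifier limit from \Cref{lem:codescent_objects_are_iterated_limits}: currying $T$ to a 2-functor $\B\rightarrow[\A^\op\times\A,\Cat]$ and using that each of these weighted-limit constructions is 2-functorial in its diagram produces the 2-functor $\lend_A T(A,A,-)$ directly.
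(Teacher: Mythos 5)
Your proposal is correct, but your primary argument takes a genuinely different route from the paper's. The paper's proof is a one-liner: having constructed the lax end in \Cref{prop:existence_in_cat} as a lax descent object --- hence as a weighted limit built from products, inserters and equifiers --- it simply invokes the functoriality of weighted limits in the diagram; in effect it curries $T$ to a 2-functor $\B\rightarrow[\A^\op\times\A,\Cat]$ and composes with the weighted-limit construction. That is precisely the alternative you sketch in your closing sentences, so your fallback coincides with the paper's actual proof. Your main argument instead works directly with the defining universal property: post-composing the universal wedge $\lambda^B$ with $T(-,-,g)$, checking that the whiskered family is again a lax wedge (which, as you correctly note, reduces to the strict componentwise commutation of the $\B$-action with the $\A^\op\times\A$-action, since whiskering by a functor preserves pasting equalities), then extracting 1-cells and 2-cells from the 1- and 2-dimensional universal properties, with strict 2-functoriality falling out of the uniqueness clauses. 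This is sound; the one verification you underplay is the 2-cell case, where you must check that the family $T(A,A,\phi)*\lambda^B_A$ genuinely satisfies the modification condition against the whiskered components $\lambda^B_f$ --- this is an interchange-law computation of the same routine kind as the wedge check, not automatic, yet you flag only the 1-cell case as non-formal. The trade-off between the two routes: yours is elementary and self-contained, needing only the universal property and no appeal to the descent-object presentation, at the cost of several routine-but-necessary diagram checks; the paper's buys all of them at once from the general theory of weighted limits, which is exactly why it expressed lax ends as limits in \Cref{prop:existence_in_cat} in the first place.
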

\begin{proof}
  It follows from \Cref{prop:existence_in_cat} and the functoriality of weighted limits.
\end{proof}

\begin{corollary}\label{cor:end_laxend_commute}
  Lax ends commutes with weighted limits
  \begin{align}
    \wlim{F}{\left(\lend_A T(A,A,-)\right)} \iso \lend_A {\wlim{F}{T(A,A,-)}}.\label{eq:end_limit_commute}
  \end{align}
  As a special case, for $T \colon \A^\op \times \B^\op \times \A \times \B \rightarrow \Cat$,
  \begin{align}
    \int_B\lend_A T(A,B,A,B) \iso \lend_A\int_B T(A,B,A,B).\label{eq:end_laxend_commute}
  \end{align}
\end{corollary}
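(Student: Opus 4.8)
The plan is to exploit the representation of a lax end as a weighted limit established in \Cref{prop:existence_in_cat}, and then appeal to the fact that weighted limits commute with weighted limits (the Fubini theorem for weighted limits in $\Cat$). Concretely, the coherence data built in that proof is assembled solely from products indexed over $\A$ and internal homs into the categories $T(A,B)$; none of it touches the extra variable. Hence the lax descent object is a weighted limit $\wlim{W}{(-)}$ for a fixed weight $W$ on the descent shape $\mathcal{J}$, with $W$ entirely independent of the $\B$-variable.

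First I would make the weighted-limit description functorial: by \Cref{cor:functoriality_laxend_cat} the assignment $B\mapsto\lend_A T(A,A,B)$ is a 2-functor, and by construction $\lend_A T(A,A,B)\iso\wlim{W}{\bigl(T(-,-,B)\bigr)}$ naturally in $B$, where the right-hand side applies the fixed descent weight $W$ to the diagram over $\mathcal{J}$ assembled from $T(-,-,B)$. Next I would apply the interchange isomorphism
\[
  \wlim{F}{\left(\wlim{W}{T}\right)}\iso\wlim{W}{\left(\wlim{F}{T}\right)},
\]
valid because $F$ and $W$ are weights over disjoint variables, so both sides compute the same weighted limit over the product shape with the external product weight. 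Reading the left-hand side as $\wlim{F}{\bigl(\lend_A T(A,A,-)\bigr)}$ and the right-hand side as $\lend_A\bigl(\wlim{F}{T(A,A,-)}\bigr)$ yields \cref{eq:end_limit_commute}; naturality of the resulting isomorphism again follows from functoriality of weighted limits.

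For the special case \cref{eq:end_laxend_commute}, I would observe that the end $\int_B$ is itself a weighted limit, namely the limit weighted by the hom-2-functor $\B(-,-)$ of $\B$. Taking $F$ to be this end-weight and applying \cref{eq:end_limit_commute} to $T\colon\A^\op\times\B^\op\times\A\times\B\to\Cat$, with the $\A$-variables serving as the lax-end variables and the $\B$-variables as the end variables, produces the stated isomorphism directly.

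The main obstacle is bookkeeping rather than depth: I must verify that the weight $W$ arising from the descent construction genuinely does not involve $\B$, so that the Fubini interchange applies verbatim, and that the various identifications -- lax end $=$ descent object $=$ $\wlim{W}{(-)}$, and end $=$ limit weighted by $\B(-,-)$ -- are compatible on the nose rather than merely up to a chain of equivalences. Given the explicit construction in \Cref{prop:existence_in_cat}, where the coherence data depends only on $\A$, this independence is transparent, and the interchange of weighted limits then delivers the result with no further computation.
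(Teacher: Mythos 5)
Your proposal is correct and takes essentially the same route as the paper, whose proof consists of exactly the two ingredients you identify: the representation of the lax end as a weighted limit via \Cref{prop:existence_in_cat}, combined with the commutativity of weighted limits with one another. The paper leaves the bookkeeping implicit (the independence of the descent weight from the $\B$-variable, and the identification of $\int_B$ as a limit weighted by the hom-2-functor), so your elaboration simply spells out what the paper's one-line proof asserts.
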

\begin{proof}
  It follows from \Cref{prop:existence_in_cat} and the commutativity of weighted limits.
\end{proof}

Note that, for coends and colimits, \Cref{cor:functoriality_laxend_cat,cor:end_laxend_commute} have duals.
Also, Fubini's rule for lax ends is proved as follows.

\begin{proposition}\label{prop:Fubini}
  Let $T\colon \A^\op\times\B^\op\times\A\times\B\rightarrow\Cat$.
  Fubini's theorem holds for both lax ends and coends.
  \begin{align}
    \lend_{A,B} T(A,B,A,B) \iso \lend_A\lend_B T           & (A,B,A,B) \iso \lend_B\lend_A T(A,B,A,B)
    \label{eq:Fubini_lax_end}                                                                                  \\
    \lend^{A,B}\!T(A,B,A,B) \iso \lend^A\!\!\!\!\lend^B\!T & (A,B,A,B) \iso \lend^B\!\!\!\!\lend^A\!T(A,B,A,B)
    \label{eq:Fubini_lax_coend}
  \end{align}
\end{proposition}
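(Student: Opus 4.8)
The plan is to prove all four isomorphisms by reduction to a single statement about lax ends, argued purely from the universal property. First I would collapse the display to one essential isomorphism. The identification $(\A\times\B)^\op\times(\A\times\B)\cong\A^\op\times\B^\op\times\A\times\B$ lets us read $\lend_{A,B}T(A,B,A,B)$ as an ordinary lax end over the single category $\A\times\B$, so it exists by \Cref{prop:existence_in_cat}. The symmetry $\A\times\B\cong\B\times\A$ makes the two outer terms of \cref{eq:Fubini_lax_end} literally the same object, so it suffices to establish the left isomorphism $\lend_{A,B}T(A,B,A,B)\iso\lend_A\lend_B T(A,B,A,B)$; the right one is then the same statement with the roles of $\A$ and $\B$ interchanged, and \cref{eq:Fubini_lax_coend} follows by duality. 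The iterated lax end on the right is well defined: by \Cref{cor:functoriality_laxend_cat}, $(A,A')\mapsto\lend_B T(A,B,A',B)$ is a 2-functor $\A^\op\times\A\rightarrow\Cat$, and $\lend_A\lend_B T$ is its lax end along the diagonal.

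Second, I would match universal properties. Both sides are determined up to isomorphism by the functor of lax wedges into $T$ they represent, so I will exhibit a bijection, natural in $X$ and compatible with modifications, between lax wedges $X\to T$ over $\A\times\B$ and lax wedges $X\to\lend_B T(?,B,-,B)$ over $\A$. The construction is a currying of wedge data: given a lax wedge over $\A\times\B$, restricting the $\B$-variables with $A$ held fixed yields for each $A$ a lax wedge $X\to T(A,-,A,-)$ over $\B$, which by the 1-dimensional universal property of $\lend_B$ factors uniquely through a functor $X\to\lend_B T(A,B,A,B)$. The wedge 2-cells $\sigma_{(f,1)}$ in the $\A$-direction, fed through the 2-dimensional universal property of $\lend_B$, supply the wedge 2-cells of an $\A$-indexed lax wedge valued in $\lend_B T$. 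Conversely, post-composing an $\A$-wedge into $\lend_B T$ with the canonical $\lambda_B$ recovers a wedge over $\A\times\B$.

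Third, I would check that these two assignments are mutually inverse and respect modifications, which upgrades the object-level bijection (the 1-dimensional property) to an isomorphism of representing data matching the 2-dimensional property as well; \Cref{prop:cat_lax_end_commute_representable} keeps the ``natural in $X$'' bookkeeping uniform. Threading the universal properties of $\lend_{A,B}T$ and of $\lend_A\lend_B T$ through this bijection then yields the desired isomorphism of categories.

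The step I expect to be the main obstacle is the coherence bookkeeping in the currying. Every 1-cell $(f,g)\colon(A,B)\to(A',B')$ of $\A\times\B$ factors both as $(f,1)\circ(1,g)$ and as $(1,g)\circ(f,1)$, and the composition axiom \cref{eq:wedge_composition} applied to these two factorizations forces a middle-four interchange between the $\A$-direction 2-cells $\sigma_{(f,1)}$ and the $\B$-direction 2-cells $\sigma_{(1,g)}$; likewise every 2-cell $(\alpha,\beta)$ decomposes into $(\alpha,1)$ and $(1,\beta)$. The crux is to verify that this interchange is exactly the naturality condition \cref{eq:wedge_naturality} for the induced $\lend_B$-valued wedge over $\A$, and that the identity, composition, and naturality axioms over $\A\times\B$ split cleanly into the corresponding axioms in the two separate directions. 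Once this equivalence of axioms is in hand, the remainder is a formal manipulation of the universal properties of the inner and outer lax ends.
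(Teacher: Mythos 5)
Your proposal is correct and takes essentially the same route as the paper: reduce to the left isomorphism of \cref{eq:Fubini_lax_end} by symmetry and duality, then curry lax wedge data through the one- and two-dimensional universal properties of the inner lax end, with the interchange compatibility you flag as the main obstacle being exactly the condition the paper imports ready-made from \Cref{prop:lax_naturality_for_each_variable}. The only differences are cosmetic: you traverse the bijection starting from a wedge into $T$ whereas the paper unfolds a functor into $\lend_A\lend_B T(A,B,A,B)$, and you propose re-verifying by hand the product-wedge decomposition that the paper cites as an already-established proposition.
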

\begin{proof}
  It suffices to prove the left isomorphism of \cref{eq:Fubini_lax_end}.
  The right isomorphism follows from the left, and \cref{eq:Fubini_lax_coend} is the dual.

  From the universality of $\lend_A$, a functor $X\rightarrow\lend_A\lend_B T(A,B,A,B)$ corresponds to a lax wedge
  $X\rightarrow \lend_B T(-,\!B,\!-,\!B)$, which is a pair
  $\{\tau_{A}\}$ and ${\{\tau_f\}}_{f\colon A\rightarrow A'}$ as
  \[\begin{tikzpicture}[x=17mm,y=7mm,baseline=(AB.base)]
      \node(K) at (0,0) {$X$};
      \node(A) at (3,0)   {$\lend_B T(A,\!B,\!A,\!B)$};
      \node(B) at (0,-2)  {$\lend_B T(A'\!,\!B,\!A'\!,\!B)$};
      \node(AB) at (3,-2) {$\lend_B T(A,\!B,\!A'\!,\!B)$};
      \draw [1cell] (K) to node {$\tau_A$} (A);
      \draw [1cell] (K) to node[swap] {$\tau_{A'}$} (B);
      \draw [1cell] (A) to node {$\lend_B T(1,\!1,\!f,\!1)$} (AB);
      \draw [1cell] (B) to node[swap] {$\lend_B T(f,\!1,\!1,\!1)$} (AB);
      \twocelldl{1.5,-1}{$\tau_f$}
    \end{tikzpicture}.
  \]
  Again, from the universality of $\lend_B$, each $\tau_A$ corresponds to a wedge from $X$ to $T(A,\!-,\!A,\!-)$.
  We now have a family of 1-cells $\{\sigma_{AB}\colon X \rightarrow T(A,\!B,\!A,\!B)\}$,
  families of 2-cells $\{\sigma_{Ag}\}$ making $\sigma_{A-}$ a wedge,
  and the family $\{\tau_f\}$.

  By defining $\sigma_{fB}$ as below,
  \[
    \begin{tikzpicture}[x=18mm,y=10mm,baseline=(BC.base)]
      \node(K) at (0,0) {$X$};
      \node(A) at (1,1.73) {$\lend_B T(A,\!B,\!A,\!B)$};
      \node(C) at (1,-1.73) {$\lend_B T(A'\!,\!B,\!A'\!,\!B)$};
      \node(AB) at (3,1.73) {$T(A,\!B,\!A,\!B)$};
      \node(BC) at (3,-1.73) {$T(A'\!,\!B,\!A'\!,\!B)$};
      \node(AC) at (4,0) {$ T(A,\!B,\!A'\!,\!B)$};
      \node(B) at (2,0) {$\lend_B T(A,\!B,\!A'\!,\!B)$};
      \draw [1cell] (K) to node {$\sigma_A$} (A);
      \draw [1cell] (K) to node[swap] {$\sigma_C$} (C);
      \draw [1cell] (A) to node        {} (AB);
      \draw [1cell] (C) to node[swap]  {} (BC);
      \draw [1cell] (AB) to node {$T(1,\!1,\!f,\!1)$} (AC);
      \draw [1cell] (BC) to node[swap] {$T(f,\!1,\!1,\!1)$} (AC);
      \draw [1cell] (A) to node {$\lend_B T(1,\!1,\!f,\!1)$} (B);
      \draw [1cell] (C) to node[swap] {$\lend_B T(f,\!1,\!1,\!1)$} (B);
      \draw [1cell] (B) to node[]     {} (AC);
      \twocelld[swap]{1,0}{$\tau_f$}
    \end{tikzpicture}
  \]
  $\sigma_{-B}$ also becomes a wedge.
  From the universality of $\lend_B T(A,B,A'\!,B)$, $\sigma_{fB}$ defines a modification of wedges.
  Writing down the of requirements for $\sigma_{fB}$ to be a modification,
  it turns out that $\sigma_{Ag}$ and $\sigma_{fB}$ are compatible in the sense of the condition in
  \Cref{prop:lax_naturality_for_each_variable}.
  Thus, we deduce that $\sigma$ is a wedge $X \rightarrow T$.
  Since we have proven that the universality for $\lend_A\lend_B T(A,\!B,\!A,\!B)$
  coincides with that for $\lend_{A,B} T(A,\!B,\!A,\!B)$, these are isomorphic.
\end{proof}

\label{section:lax_end_3}

\section{Lax limits, lax end calculus}\label{section:lax_limits_4}

In the previous section, we established some basic isomorphisms for lax ends.
Now, we will see more advanced results for lax ends, including the relation with lax limits.

The first important theorem is \Cref{thm:adjunction_functor_category_cat},
which establishes two adjunctions between a presheaf 2-category $[\A^\op,\Cat]$
and lax presheaf 2-category $\Lax{\A^\op,\Cat}$, which are lax morphism classifier and coclassifier.
\[
  \begin{tikzpicture}
    \node(p) at (0,0) {$[\A^\op,\Cat]$};
    \node(l) at (4,0) {$\Lax{\A^\op,\Cat}$};
    \draw[right hook->] (p) to node {} (l);
    \draw[1cell, bend right, swap] (l) to node {${(-)}^\sharp$} (p);
    \draw[1cell, bend left] (l) to node {${(-)}^\flat$} (p);
    \node at (2,0.4) {$\bot$};
    \node at (2,-0.4) {$\bot$};
  \end{tikzpicture}
\]

To this end, we first define this ${(-)}^\sharp$ and ${(-)}^\flat$.

\begin{definition}
  Let $F \colon \A^\op \rightarrow \Cat$ be a 2-functor. We define 2-functors $F^\sharp$ and $F^\flat$ from $\A^\op$ to $\Cat$ as follows:
  \begin{align}
    F^\sharp & \colon \lend^A \A(-,A) \times FA \label{eq:def_sharp} \\
    F^\flat  & \colon \lend_A [\A(A,-), FA] \label{eq:def_flat}
  \end{align}
\end{definition}

Now that we have already proven some useful isomorphisms, it just suffices to combine them.

\begin{theorem}\label{thm:adjunction_functor_category_cat}
  The inclusion $[\A^\op, \Cat] \rightarrow \Lax{\A^\op, \Cat}$ has both a left adjoint ${(-)}^\sharp$ and a right adjoint ${(-)}^\flat$.
  \begin{align}
    \Lax{\A^\op,\Cat} (F,H) & \iso
    [\A^\op, \Cat](F^\sharp,H)     \label{eq:adjunction_cat_sharp} \\
    \Lax{\A^\op,\Cat} (H,F) & \iso
    [\A^\op, \Cat](H, F^\flat)     \label{eq:adjunction_cat_flat}
  \end{align}
\end{theorem}
\begin{proof}
  For the right adjoint,
  \begin{align*}
    [\A^\op, \Cat](H, F^\flat)
     & \iso \int_C [HC, F^\flat C]                               \\
     & \iso \int_C \left[HC, \lend_A[\A(A,C), FA]\right]
     & \mathrm{by\,\cref{eq:def_flat}}                           \\
     & \iso \int_C\lend_A[HC, [\A(A,C), FA]]
     & \mathrm{by\,\cref{eq:cat_lax_end_commute_representable}}  \\
     & \iso \lend_A\int_C[\A(A,C) \times HC, FA]
     & \mathrm{by\,\cref{eq:end_laxend_commute}}                 \\
     & \iso \lend_A\left[\int^C \A(A,C)\times HC, FA\right]      \\
     & \iso \lend_A[HA, FA]
     & \mathrm{by\,Yoneda\,Lemma}                                \\
     & \iso \Lax{\A^\op,\Cat} (H,F)
     & \mathrm{by\,\cref{eq:hom_of_functor_category_as_lax_end}}
  \end{align*}
  And the left adjoint is the dual.
\end{proof}

The left adjoint ${(-)}^\sharp$ we established above is known to give a representation of lax limits $\laxlim^F G$
with usual weighted limits $\wlim{F^\sharp}{G}$.
We start by recalling the definition of lax limits and then show this statement.

\begin{definition}
  Let $F \colon \A \rightarrow \Cat$ and $G\colon \A \rightarrow \K$ be 2-functors.
  A \emph{lax limit} is a representing object $\laxlim^F G$
  of $\Lax{\A, \Cat}(F, \widehat G)$.
  \begin{equation}
    \K(K, \laxlim^F G) \iso \Lax{\A, \Cat}(F, \K(K, G-))\label{eq:def_lax_limit}
  \end{equation}
  Dually, for $F\colon\A^\op\rightarrow\Cat$ and $G\colon\A\rightarrow\K$,
  a \emph{lax colimit} is a representing object $\laxcolim^F G$
  of $\Lax{\A^\op, \Cat}(F,\tilde G)$.
  \begin{equation}
    \K(\laxcolim^F G, K) \iso \Lax{\A^\op, \Cat}(F, \K(G-, K))\label{eq:def_lax_colimit}
  \end{equation}
\end{definition}

If $\K = \Cat$, we can calculate the right-hand side of \cref{eq:def_lax_limit} as
\begin{align*}
  \Lax{A, Cat}(F, [X, G-])
   & \iso \lend_{A} [FA, [X, GA]]
   & \mathrm{by\,\cref{eq:hom_of_functor_category_as_lax_end}} \\
   & \iso \lend_{A} [X, [FA, GA]]                              \\
   & \iso \left[X, \lend_{A} [FA, GA]\right]
   & \mathrm{by\,\cref{eq:cat_lax_end_commute_representable}}  \\
   & \iso [X, \Lax{\A, \Cat}(F, G)].
   & \mathrm{by\,\cref{eq:hom_of_functor_category_as_lax_end}}
\end{align*}
Therefore, by Yoneda lemma, we have
\begin{align}
  \laxlim^F G \iso
  \lend_{A} [FA, GA] \iso
  \Lax{\A, \Cat}(F, G) \label{eq:laxlim_in_cat}
\end{align}
for any $F, G \in [\A, \Cat]$.
And, with \Cref{thm:adjunction_functor_category_cat},
\begin{align}
  \laxlim^F G
  \iso [\A, \Cat](F^\sharp, G)
  \iso \wlim{F^\sharp}{G}, \label{eq:cat_laxlim_with_lim}
\end{align}
or conversely,
\begin{align}
  \laxlim^F G
  \iso [\A, \Cat](F, G^\flat)
  \iso \wlim{F}{G^\flat}. \label{eq:cat_laxlim_with_lim_flat}
\end{align}
Isomorphisms \cref{eq:cat_laxlim_with_lim,eq:cat_laxlim_with_lim_flat} show that every lax limits in $\Cat$ can be represented by weighted limits with the same weights or diagrams.
We show \cref{eq:cat_laxlim_with_lim} can be generalized in general 2-categories.

\begin{proposition}\label{prop:laxlim_with_lim}
  Let $G \colon \A \rightarrow \K$ be a 2-functor. Then
  \begin{align}
    \laxlim^F G   & \iso \wlim{F^\sharp}{G},\label{eq:laxlim_with_limit}       \\
    \laxcolim^F G & \iso \wcolim{F^\sharp}{G}\label{eq:laxcolim_with_colimit},
  \end{align}
  whenever they exist.
\end{proposition}
\begin{proof} \Cref{eq:laxlim_with_limit} follows from
  \begin{align*}
    \K(K, \laxlim^F G)
     & \iso \Lax{\A,\Cat}(F, \widehat GK)    & \mathrm{by\,\cref{eq:def_lax_limit}}        \\
     & \iso [\A,\Cat](F^\sharp, \widehat GK) & \mathrm{by\,\cref{eq:adjunction_cat_sharp}} \\
     & \iso \K(K, \wlim{F^\sharp}{G}).
  \end{align*}
  \Cref{eq:laxcolim_with_colimit} is similar.
\end{proof}

Note that, by similar arguments as in \cref{eq:laxlim_in_cat,prop:laxlim_with_lim}, the lax colimits in $\Cat$ can be presented
in the following several other ways:
\begin{align*}
  \laxcolim^F G & \iso \lend^A FA \times GA \\
                & \iso \lend^A GA \times FA \\
                & \iso \laxcolim^G F        \\
                & \iso \wcolim{G^\sharp}{F}
\end{align*}

We can directly deduce the commutativity of lax ends from the commutativity of weighted limits, which is the right isomorphism in Fubini' theorem \ref{prop:Fubini}.
And from Fubini's theorem and the representation of lax limits in $\Cat$ with lax ends \cref{eq:laxlim_in_cat},
we deduce that lax limits commute.

\begin{corollary}
  Let $F\colon \A\rightarrow\Cat$, $F'\colon\B\rightarrow\Cat$, and $G\colon\A\times\B\rightarrow\Cat$.
  Then,
  \begin{align*}
    \laxlim^F \laxlim^{F'} G     & \iso \laxlim^{F'} \laxlim^F G      \\
    \laxcolim^F \laxcolim^{F'} G & \iso \laxcolim^{F'} \laxcolim^F G.
  \end{align*}
\end{corollary}

\begin{example}
  One of the simple lax limits is the conical lax limit, that is,
  a lax limit weighted by the constant functor $\Delta\one\colon\A\rightarrow\Cat$.
  By \Cref{prop:laxlim_with_lim}, this is $\laxlim^{\Delta\one} G \iso \wlim{{(\Delta\one)}^\sharp}{G}$.
  Let us examine the weight $(\Delta\one)^\sharp$ of the right-hand side.
  By the definition of ${(-)}^\sharp$, $(\Delta\one)^\sharp$ is a 2-functor $\lend^A \A(A,-)$, sending $C\in\A$ to
  $\lend^A \A(A,C) \iso \laxcolim^{\Delta\one}\A(-,C)$,
  In fact, $(\Delta\one)^\sharp C$ turns out to be the \emph{lax slice category} $\A/C$,
  whose object is a 1-cell into $C$, and whose morphism from $p$ to $q$ is a pair $(f,\bar f)$ as follows.
  \[\begin{tikzpicture}[x=10mm,y=7mm]
      \node(A) at (-1,0) {$A$};
      \node(A') at (1,0) {$A'$};
      \node(C) at (0,-2) {$C$};
      \draw[1cell] (A) to node {$f$} (A');
      \draw[1cell,swap] (A) to node {$p$} (C);
      \draw[1cell] (A') to node {$q$} (C);
      \twocellr{0,-1}{$\bar f$}
    \end{tikzpicture}\]
  To show this actually is a lax colimit, we then construct the unit lax cocone $\{\lambda_A\colon\A(A,C) \rightarrow \A/C\}$
  with ${\{\lambda_f\colon \lambda_A\circ\A(f,C)\rightarrow\lambda_{A'}\}}_{f\colon A\rightarrow A'}$ by
  \begin{align*}
     & \lambda_A(p\colon A\rightarrow C) = p,
    \quad
    \lambda_A(\alpha\colon p\rightarrow p') = (1_A, \alpha),                    \\
     & {(\lambda_f)}_{q\colon A'\rightarrow C} = (f,1) \colon qf \rightarrow q.
  \end{align*}
  We need to show its universality. To this end, let us assume there is another lax cocone
  $\{\sigma_A\colon \A(A,C)\rightarrow X\}$ with $\{\sigma_f\colon\sigma_A\circ\A(f,C)\rightarrow\sigma_{A'}\}$,
  and a morphism of cocone $u\colon \A/C \rightarrow X$. Then, these data must satisfy the following:
  \begin{align*}
    \begin{tikzpicture}
      [x=10mm,y=7mm,baseline=(l.base)]
      \node(A) at (0,0) {$\A(A,C)$};
      \node(S) at (0,-2) {$\A/C$};
      \node(X) at (2,-2) {$X$};
      \draw[1cell,swap] (A) to node(l){$\lambda_A$} (S);
      \draw[1cell] (A) to node {$\sigma_A$} (X);
      \draw[1cell] (S) to node {$u$} (X);
    \end{tikzpicture}\quad & \quad\text{commutes, and} \\
    \begin{tikzpicture}
      [x=10mm,y=9mm,baseline=(l.base)]
      \node(A) at (-2,-1.8) {$\A(A,C)$};
      \node(A') at (0,0) {$\A(A',C)$};
      \node(S) at (0,-1.8) {$\A/C$};
      \node(X) at (2,-1.8) {$X$};
      \draw[1cell] (A') to node(l){$\lambda_{A'}$} (S);
      \draw[1cell,swap] (A) to node(l){$\lambda_{A'}$} (S);
      \draw[1cell] (A') to node(l) {$\sigma_{A'}$} (X);
      \draw[1cell] (S) to node {$u$} (X);
      \draw[1cell,swap] (A') to node {$\A(f,C)$} (A);
      \twocelld{-0.7,-1.2}{$\lambda_f$}
    \end{tikzpicture}
    \quad                           & =\quad
    \begin{tikzpicture}
      [x=10mm,y=9mm,baseline=(l.base)]
      \node(A) at (-2,-1.8) {$\A(A,C)$};
      \node(A') at (0,0) {$\A(A',C)$};
      \node(S) at (0,-1.8) {$\A/C$};
      \node(X) at (2,-1.8) {$X$.};
      \draw[1cell] (A') to node(l) {$\sigma_{A'}$} (X);
      \draw[1cell] (S) to node {$u$} (X);
      \draw[1cell,swap] (A') to node {$\A(f,C)$} (A);
      \draw[1cell,swap] (A) to node{$\lambda_{A'}$} (S);
      \twocelld{0,-0.9}{$\sigma_f$}
    \end{tikzpicture}
  \end{align*}
  Therefore, we need to have
  \begin{itemize}
    \item for each $p\colon A\rightarrow C$, $u(p) = \lambda_A(p) = \sigma_A(p)$,
    \item for each $\alpha \colon p \rightarrow q$ in $\A(A,C)$,
          $u((1,\alpha)) = \lambda_A(\alpha) = \sigma_A(\alpha)$,
    \item for each $f\colon A \rightarrow A'$ and $q\colon A'\rightarrow C$,
          $u((f,1)) = u{(\lambda_f)}_q = {(\sigma_f)}_q$.
  \end{itemize}
  Since an arrow $(f,\bar f)\colon p\rightarrow q$ in $\A/C$ is the composite $p\xrightarrow{(1,\bar f)}qf\xrightarrow{(f,1)}q$,
  the data of $u$ is fully determined by $\sigma$.
  This proves the uniqueness of $u$, and the existence of $u$ is checked by some diagram chasing.

  When $\A$ is a category $\two$ for example, the resulting weight ${(\Delta \one)}^\sharp$ sends
  the diagram $0\rightarrow 1$ to $\one\xrightarrow{0}\two$.
\end{example}

In enriched category theory, $\int_A T(A,A)$ could be rewritten as $\wlim{\Hom_\A}{T}$.
Correspondingly, we wish to rewrite lax ends using limits or lax limits.
The reader might naturally think of $\laxlim^{\Hom_\A} T$ as a candidate for the rewriting.
However, it turns out to be incorrect.
In fact, calculating $[X,\laxlim^{\Hom_\A}T]$ yields $\lend_A\lend_C \left[\A(C,A), \left[X, T(C,A)\right]\right]$,
which is different from $[X,\lend_AT(A,A)] \iso \lend_A\int_C\left[\A(C,A), \left[X,T(C,A)\right]\right]$.

The next theorem gives the correct answer.

\begin{theorem}\label{thm:representation_of_laxend_by_limit}
  Let $T \colon \A^\op \times \A \rightarrow \Cat$ be a 2-functor. Then
  \begin{align}
    \lend_A T(A,A) & \iso \wlim{{(Y-)}^\sharp ?}{T(?,-)}   \label{eq:lend_with_wlim}     \\
    \lend^A T(A,A) & \iso \wcolim{{(Y-)}^\sharp ?}{T(?,-)} \label{eq:lcoend_with_wcolim}
  \end{align}
  where ${(Y-)}^\sharp ?$ is a profunctor sending $(A,A') \in \A^\op \times \A$ to ${(YA')}^\sharp A$.
\end{theorem}
\begin{proof}
  \cref{eq:lend_with_wlim} follows from the following isomorphisms. \cref{eq:lcoend_with_wcolim} is similar.
  \begin{align*}
     & \left[X, \wlim{{(Y-)}^\sharp ?}{T(?,-)}\right]                                      \\
     & \iso \int_{C,C'}[{(YC')}^\sharp C, [X, T(C,C')]]                                    \\
     & = \int_{C,C'}\left[\lend^A \A(C,A) \times \A(A,C'), [X, T(C,C')]\right]
     & \mathrm{by\,\cref{eq:def_sharp}}                                                    \\
     & \iso \int_{C,C'}\lend_A\left[\A(C,A) \times \A(A,C'), [X, T(C,C')]\right]
     & \mathrm{by\,the\,dual\,of\,\cref{eq:cat_lax_end_commute_representable}}             \\
     & \iso \lend_A\int_C\int_{C'}\left[\A(C,A), \left[\A(A,C'), [X,T(C,C')]\right]\right]
     & \mathrm{by\,\cref{eq:end_laxend_commute}}                                           \\
     & \iso \lend_A\int_C\left[\A(C,A), \int_{C'}\left[\A(A,C'), [X,T(C,C')]\right]\right] \\
     & \iso \lend_A\int_C\left[\A(C,A), \left[X,T(C,A)\right]\right]
     & \mathrm{by\,Yoneda\,lemma}                                                          \\
     & \iso \lend_A\left[X,T(A,A)\right]
     & \mathrm{by\,Yoneda\,lemma}                                                          \\
     & \iso \left[X, \lend_AT(A,A)\right].
     & \mathrm{by\,\cref{eq:cat_lax_end_commute_representable}}
  \end{align*}
\end{proof}

\section{Lax ends in general 2-Categories}\label{section:general_lax_end_5}

In the previous two sections, we examined lax ends in $\Cat$, but of course, lax ends can be defined in general 2-categories,
which we discuss in this section.
The problem is that, while there are several characterizations of the lax ends in $\Cat$, it is not obvious which one should be used for the generalization.
Here, we adopt its commutativity with representables as the definition for the general case and show that several other characterizations coincide.

\begin{definition}
  Let $T$ be a 2-functor $T \colon \A^\op \times \A \rightarrow \K$.
  We define \emph{lax [co]ends in $\K$} as representing objects
  \begin{align}
    \K\left(B, \lend_A T(A,A)\right) & \iso \lend_A \K(B, T(A,A)), \label{eq:def_general_laxend} \\
    \K\left(\lend^A T(A,A), B\right) & \iso \lend_A \K(T(A,A), B). \nonumber
  \end{align}
\end{definition}

Firstly, we check the universality of unit in \cref{eq:def_general_laxend}.

\begin{proposition}
  Let $T$ be a 2-functor $T \colon \A^\op \times \A \rightarrow \K$.
  Then, $T$ has a lax end in $\K$ if and only if there is a lax wedge
  that has the same universality as lax ends in $\Cat$.
\end{proposition}
\begin{proof}
  As we observed in \Cref{prop:cat_lax_end_commute_representable},
  there is also a bijection between
  the set of lax wedges $K \rightarrow T$ in $\K$ and
  the set of lax wedges $\one\rightarrow\K(B,T(?-))$ in $\Cat$.
  Moreover, those modifications precisely coincide.
\end{proof}

Let $T\colon\A^\op\times\A\times\B\rightarrow\K$ be a 2-functor.
To verify the notation $\lend_A T(A,A,-)$ defined in the case $\K=\Cat$ is compatible with the definition of the general lax end,
we need to check that the lax end in a functor category is computed pointwise.
This is checked by the following isomorphism for an arbitrary $F\colon \B\rightarrow \K$.
\begin{align*}
  \lend_A \left[B,\K\right] (F, T(A,A,-))
   & \iso \lend_A \int_B \K\left(FB,T(A,A,B)\right)           \\
   & \iso \int_B \lend_A \K\left(FB,T(A,A,B)\right)
   & \mathrm{by~\cref{eq:end_laxend_commute}}                 \\
   & \iso \left[\B,\K\right]\left(F-,\lend_A T(A,A,-)\right).
\end{align*}
Therefore, we can use the notation $\lend_AT(A,A,-)$ freely.

The other characterization of the lax limits in $\Cat$ was a weighted limit $\wlim{{(Y-)}^\sharp ?}{T(?,-)}$, proved in \Cref{thm:representation_of_laxend_by_limit}.
Its generalization to arbitrary 2-categories again can be shown to agree with the definition above by similar calculation: for general $T\colon\A^\op\times\A\rightarrow\K$,
\begin{align*}
  \lend_A T(A,A) & \iso \wlim{{(Y-)}^\sharp ?}{T(?,-)}    \\
  \lend^A T(A,A) & \iso \wcolim{{(Y-)}^\sharp ?}{T(?,-)}.
\end{align*}
As a corollary, it is proved that, if $\K$ is [co]complete, then $\K$ admits lax [co]ends.

By the definition of lax ends in $\K$ and \Cref{cor:end_laxend_commute},
lax [co]ends in general $\K$ also commute with weighted [co]limits.
\begin{align}
  \wlim{F}{\left(\lend_AT(A,A,-)\right)} \iso \lend_A\wlim{F}{T(A,A,-)}\label{eq:general_laxend_commute_limit}
\end{align}
Also, the Fubini's rule follows from the $\Cat$ case (\Cref{prop:Fubini}).

Next, we would like to show the adjunctions between
functor categories with 2/lax transformations in general $\K$ again.
As in the $\Cat$ case, we define ${(-)}^\sharp$ and ${(-)}^\flat$.

\begin{definition}
  Let $F\colon\A\rightarrow\K$ be a 2-functor.
  We define $F^\sharp, F^\flat\colon\A\rightarrow\K$ as follows.
  \begin{align}
    F^\sharp & \colon \lend^A \A(A,-) \copower FA \label{eq:def_sharp_general} \\
    F^\flat  & \colon \lend_A \A(-,A) \power FA \label{eq:def_flat_general}
  \end{align}
\end{definition}

Note that a general $\K$ do not admit ${(-)}^\flat$[or ${(-)}^\sharp$],
since $\K$ might not have [co]limits.

And by the same calculation as in the $\Cat$ case,
the next theorem holds.

\begin{theorem}
  If $\K$ is complete, then the inclusion $[\A,\K]\rightarrow\Lax{\A,\K}$ has
  a right adjoint ${(-)}^\flat$.
  Dually, if $\K$ is cocomplete, then the inclusion $[\A,\K]\rightarrow\Lax{\A,\K}$ has
  a left adjoint ${(-)}^\sharp$.
\end{theorem}

We then would like to return to the topic of lax limits.
Let $\K$ be a complete 2-category, and
$F\colon\A\rightarrow \K$, $G\colon\A\rightarrow\K$ be 2-functors.
By similar isomorphisms just before \cref{eq:laxlim_in_cat}, we have an isomorphism,
\begin{align*}
  \Lax{\A,\Cat}(F,\K(K,G-))
   & \iso \lend_A [FA,\K(K,GA)]
   & \mathrm{by~\cref{eq:hom_of_functor_category_as_lax_end}} \\
   & \iso \lend_A \K\left(K,FA\power GA\right)                 \\
   & \iso \K\left(K,\lend_AFA\power GA\right).
   & \mathrm{by~\cref{eq:def_general_laxend}}
\end{align*}
Therefore, a lax limit can also be presented as,
\begin{align}
  \laxlim^F G \iso \lend_A FA\power GA.\label{eq:laxlim_with_laxend_and_cotensor}
\end{align}
\Cref{prop:laxlim_with_lim} showed that this could be presented as the limit $\wlim{F^\sharp}{G}$.
However, since we did not define $G^\flat$ for general $\K$ in the previous section,
the isomorphism
\begin{align}\label{eq:general_laxlim_with_lim_flat}
  \laxlim^F G \iso \wlim{F}{G^\flat}
\end{align}
in \cref{eq:cat_laxlim_with_lim_flat} did not make sense at that time.
But now, since we defined $G^\flat$ in \cref{eq:def_flat_general}, one can check
\cref{eq:general_laxlim_with_lim_flat} by
\begin{align*}
  \K\left(K, \wlim{F}{G^\flat}\right)
   & \iso \left[\A, \Cat\right] \left(F, \K\left(K,G^\flat-\right)\right) \\
   & \iso \left[\A, \Cat\right] \left(F, \K\left(K,G-\right)^\flat\right) \\
   & \iso \Lax{\A, \Cat} (F, \K(K, G-)),
\end{align*}
where the second isomorphism is from the fact that representables preserves lax ends and powers.

\section{The class of limits where lax ends live}\label{section:pie_lend_llim_6}

The classification of 2-categorical limits is an interesting and complicated problem.
To classify them, several classes of limits have been invented, such as the flexible limits \cite{bird1989flexible} and the PIE limits \cite{power_robinson_1991_pie}.
In this section, we show that a lax end is a PIE limit, but not a lax limit.
We first show the former part: a lax end is a PIE limit.

\begin{theorem}
  If a 2-category $\K$ admits products, inserters, and equifiers,
  Then it admits all lax ends and lax limits.
\end{theorem}
\begin{proof}
  In the same way as we did in \Cref{prop:existence_in_cat},
  we can define $X_1$, $X_2$, $X_3$ as follows,
  \begin{align*}
    X_1 & = \prod_{A\in\A} T(A,A)                                \\
    X_2 & = \prod_{A,B\in\A} \A(A,B) \power T(A,B)                \\
    X_3 & = \prod_{A,B,C\in\A} \A(B,C)\times\A(A,B) \power T(A,C)
  \end{align*}
  with six 1-cells in coherence data in the same manner,
  and check the five identities.
  Since $\K$ has inserters and equifiers,
  a descent object of this coherence data does exist in $\K$,
  which is the desired lax end in $\K$.

  The isomorphism \cref{eq:laxlim_with_laxend_and_cotensor} shows that,
  since $\K$ admits lax ends and powers, it also admits lax limits.
\end{proof}

Then, we show that there is a 2-category with all lax limits but not all lax ends.

\begin{lemma}
  A 2-category $\K$ with lax ends admits powers, all lax limits and oplax limits.
\end{lemma}
\begin{proof}
  Let $K\in \K$ and $\mathbb{A}$ be a category.
  The lax end of the constant functor $\Delta K\colon \mathbb{A}^\op\times\mathbb{A}\rightarrow \K$ determines the power $\mathbb{A}\power K$ since we have an isomorphism
  \begin{align*}
    \K\left(X, \lend_{A\in\mathbb{A}} \Delta K\right) \iso
    \lend_{A\in\mathbb{A}} \K(X,K) \iso
    [\mathbb{A}, \K(X,K)]
    \iso \K(X, \mathbb{A}\power K).
  \end{align*}
  Therefore, from \cref{eq:laxlim_with_laxend_and_cotensor}, $\K$ has all lax limits.

  As we mentioned in \Cref{section:lax_naturality_2}, an oplax wedge and end of $T$ can be
  defined as a lax wedge and end of another $T'$ with $T'(A,B) = T(A,B)$.
  And therefore,
  writing oplax ends with $\oplend$,
  $\K$ also admits oplax limits $\oplend_A FA\power GA\iso\oplaxlim^F G$.

\end{proof}

\begin{theorem}
  There is a 2-category with all lax limits, but does not have lax ends.
\end{theorem}
\begin{proof}
  Let $\K$ be a full sub 2-category of $\Cat$ whose objects are categories with finite products.
  Since there is a 2-monad $T$ on $\Cat$ which has $\K$ as the 2-category of algebras and oplax morphisms,
  $\K$ has all lax limits, which was shown in \cite{Lack2005,Lack_Shulman}.

  However, this $\K$ lucks oplax limits of an arrow.
  To see this, first observe that $\K$ is dense in $\Cat$,
  which is because $\One$ is dense in $\Cat$ as a $\Cat$-enriched category.
  And therefore all the limits in $\K$ are those in $\Cat$.
  Let $\mathbb{B}$ a category with finite products and $b\in \mathbb{B}$.
  Then, the oplax limit of the arrow $\one\xrightarrow{b}\mathbb{B}$ in $\Cat$
  is the slice category $\mathbb{B}/b$.
  Since the product in slice category is the pullback,
  the oplax limit is not included in $\K$ in general.
\end{proof}

\section{Pseudo ends and bicategorical (co)Yoneda lemma}\label{section:yoneda_7}

As is widely known, the theory of bicategories has its version of the Yoneda lemma, the bicategorical Yoneda lemma.
For simplicity, we restrict to 2-categories and 2-functors here.
\begin{theorem}[bicategorical Yoneda lemma]
  Let $F\colon \A^\op \rightarrow\Cat$ be a 2-functor. Then there is a following equivalence of categories,
  which exists the pseudo-natural in $A \in \A$ and $F \in \Ps{\A^\op, \Cat}$.
  \begin{align}
    \Ps{\A^\op, \Cat}(YA, F) \eqv FA
    \label{lem:bicategorical_yoneda}
  \end{align}
\end{theorem}
For the proof, consult other literature such as \cite{johnson2021}.

Clearly, since everything proved in \Cref{section:lax_naturality_2,section:lax_end_3,section:lax_limits_4} has its counterpart in pseudo case
with pseudo transformations/ends/limits, etc.
We denote pseudo ends by integral with tilde $\psend_A$, and adjunctions corresponding to \Cref{thm:adjunction_functor_category_cat} by
double sharp $\dsharp$ and double flat $\dflat$.
\begin{align}
  \Ps{\A^\op,\Cat} (F,H) & \iso
  [\A^\op, \Cat](F^\dsharp,H)     \label{eq:adjunction_cat_dsharp} \\\nonumber
  \Ps{\A^\op,\Cat} (H,F) & \iso
  [\A^\op, \Cat](H, F^\dflat)     \label{eq:adjunction_cat_dflat}
\end{align}
Since the hom-category of $\Ps{\A^\op,\Cat}$ can be represented with pseudo ends, the bicategorical Yoneda lemma is equivalent to saying
\begin{align*}
  \psend_{C\in\A} [\A(C, A), FC] \eqv FA.
\end{align*}
Since the left-hand side is the definition of $F^\dflat A$, there is an equivalence $F^\dflat A \eqv FA$.

This equivalence concludes the equivalence of functors $F\eqv F^\dflat$.
However, it should be noted that this is one that in $\Ps{\A^\op, \Cat}$ and not in $[\A^\op, \Cat]$,
since the bicategorical Yoneda lemma is only pseudo-natural in $A$.

In the remaining part of this section,
we show the bicategorical coYoneda lemma $F^\dsharp A \eqv FA$
as the dual for the bicategorical Yoneda lemma $F^\dflat A\eqv FA$,
which is also shown in \cite{Alexander2016coyoneda}.

\begin{theorem}[bicategorical coYoneda lemma]
  There exists the following equivalence of categories which is pseudo natural for $A\in\A$ and 2-natural for $F\in\Ps{\A^\op, \Cat}$,
  \[
    F^\dsharp A = \psend^{C\in\A} \A(A, C) \times FC \eqv FA.
  \]
\end{theorem}
\begin{proof}
  Let the following be the universal cowedges for the end or the pseudo end.
  \begin{align*}
    \lambda_{CA}            \colon  \A(C,A) \times FC \rightarrow \int^C   \A(C,A) \times FC \\
    \widetilde{\lambda}_{CA}\colon  \A(C,A) \times FC \rightarrow \psend^C \A(C,A) \times FC
  \end{align*}
  Note that these are both 2-natural for $A$.

  Since 2-wedge $\lambda_{-A}$ is also a pseudo wedge, from the universality of pseudo ends,
  there is a unique functor $\varepsilon_A\colon F^\dsharp A\rightarrow FA$
  satisfying $\varepsilon_A \wtilde{\lambda}_{CA} = \lambda_{CA}$.
  \[
    \begin{tikzcd}
      {\A(A,C) \times FC}
      \arrow[d,  "{\widetilde{\lambda}_{CA}}"]
      \arrow[dr, "{{\lambda}_{CA}}"]
      \arrow[drr,"{\overline{F_{AC}}}", bend left=10] \\
      {F^\dsharp A = \psend^{C\in\A} \A(A, C)\times FC} \rar["{\varepsilon_A}"'] &
      {\int^{C\in\A} \A(A, C)\times FC} \rar[phantom, "\iso"]
      &
      {FA}
    \end{tikzcd}
  \]
  This $\varepsilon_A$ is the counit for the left adjoint ${(-)}^\dsharp$.
  Since $\widetilde{\lambda}_{AD}$ is 2-natural for $D$,
  the transpose $\alpha_{AD} \colon FA \rightarrow [\A(A,D), F^\dsharp D]$
  is also 2-natural for $D$. So, $\alpha_{AD}$ induces
  $\eta_A\colon FA \rightarrow \int_D [\A(A,D), F^\dsharp D] \iso F^\dsharp A$,
  which is the unit, pseudo natural for $A$.

  The one side of triangular identity for the left adjoint ${(-)}^\dsharp$
  tells $\varepsilon_A\eta_A = \id$.
  Therefore, it suffices to show $\eta_A\varepsilon_A \iso \id$.
  To show this, we precompose $\widetilde{\lambda}_{CA}$ and postcompose
  $F^\dsharp A \iso \int_D [\A(D,A), F^\dsharp D] \xrightarrow{\rho_{DA}} [\A(D,A), F^\dsharp D]$
  to both $\eta_A\varepsilon_A$ and $\id$, where $\rho_{DA}$ is the universal wedge.
  By showing these are natural isomorphic,
  $\eta_A\varepsilon_A \iso \id$ is deduced from the universality of the pseudo coend and the end.

  Precompositition of $\widetilde{\lambda}_{CA}$ to $\varepsilon_A$ is $\overline{F_{AC}}$,
  where $\overline{F_{AC}}$ is the transpose of $F_{AC}\colon \A(A,C) \rightarrow [FC,FA]$,
  and postcomposition of $\rho_{DA}$ to $\eta_A$ is $\alpha_{AD}$.
  Thus, the left-hand side is $\alpha_{AD}\overline{F_{AC}}$.
  On the other hand, the right-hand side -- the composition of $\widetilde{\lambda}_{CA}$ and $\rho_{DA}$ -- is
  the transpose of
  \begin{align}
    \A(D,A) \times \A(A,C) \times FC \xrightarrow{c_{DAC} \times 1} \A(D,C) \times FC
    \xrightarrow{\widetilde{\lambda}_{CD}} F^\dsharp D.\label{eq:_righthandside}
  \end{align}
  Here, we used the 2-naturality of $\widetilde{\lambda}_{CA}$ for $A$.
  The pseudo naturality of $\widetilde{\lambda}_{CA}$ for $A$ produces the natural isomorphism,
  \[
    \begin{tikzpicture}
      \node(ul) at (0,2) {$\A(A,C)$};
      \node(dl) at (0,0) {$[\A(D,A)\times FC,\ \A(D,C)\times FC]$};
      \node(ur) at (6,2) {$[\A(D,A)\times FC,\ \A(D,A)\times FA]$};
      \node(dr) at (6,0) {$[\A(D,A)\times FC,\ F^\dsharp D].$};
      \node at (3,1) {$\iso$};
      \draw[1cell] (ul) to node {$\A(D,A)\times F-$} (ur);
      \draw[1cell,swap] (ul) to node {$\A(D,-)\times FC$} (dl);
      \draw[1cell] (ur) to node {$[1,\widetilde{\lambda}_{CA}]$} (dr);
      \draw[1cell] (dl) to node {$[1,\widetilde{\lambda}_{CD}]$} (dr);
    \end{tikzpicture}
  \]
  Carefully checked, it can be proved that $\alpha_{AD}\overline{F_{AC}}$ is the transpose of the upper right of the diagram,
  and \cref{eq:_righthandside} is the transpose of the down left.

  Showing the naturality of $A$ and $F$ requires a bit more work.
  That is because, as we showed in \Cref{prop:lax_naturality_for_each_variable},
  the naturality for each variable does not show the whole naturality.
  The compatibility of naturality with $C$ or $D$ needs to be checked,
  but we omit the proof here for redundancy.
  One can also check these naturality from 2-monad theory in the following section.
\end{proof}

In the same way, one can prove bicategorical Yonede lemma
$F^\dflat A \eqv FA$ dually.

\section{From the point of view of 2-monad theory}\label{section:2-monad_8}

The adjunction we showed at \cref{eq:adjunction_cat_dsharp} 
\begin{align}
  \Ps{\A^\op,\Cat} (F,H) & \iso
  [\A^\op, \Cat](F^\dsharp,H)     \\
  \Ps{\A^\op,\Cat} (H,F) & \iso
  [\A^\op, \Cat](H, F^\dflat)
\end{align}
can also be derived from 2-monad theory \cite{blackwell-2-monad}.
There is a 2-monad $T$ over the 2-category $[\ob(\A^\op), \Cat]$
whose strict algebras are 2-functors
and whose strict/pseudo/lax morphisms are 2-/pseudo/lax transformations.
This 2-category $[\ob(\A^\op, \Cat)]$ is complete and cocomplete, and 2-monad $T$ has a right adjoint \cite{Lack_Shulman}.
Therefore, $T$ satisfies the coherence condition \cite{Lack2005codescent}, that is,
\begin{itemize}
  \item The inclusion $T$--$\mathrm{Alg}_s = [\A^\op, \Cat] \hookrightarrow T$--$\mathrm{Alg} = \Ps{\A^\op, \Cat}$
  has a left adjoint ${(-)}^\dsharp$ and a right adjoint ${(-)}^\dflat$,
  \item whose units or counits in $\Ps{\A^\op, \Cat}$ are (pseudo) equivalences $F^\dsharp \eqv F \eqv F^\dflat$.
\end{itemize}
On the other hand, for lax natural transformations, there are two adjunctions in $\Lax{\A^\op, \Cat}$ between $F$ and $F^\sharp$, and between $F$ and $F^\flat$.

\end{document}